\numberwithin{equation}{section}
\theoremstyle{plain}
\newtheorem{theorem}{Theorem}[section]
\newtheorem{proposition}[theorem]{Proposition}
\newtheorem{lemma}[theorem]{Lemma}
\newtheorem{corollary}[theorem]{Corollary}
\theoremstyle{remark}
\newtheorem{remark}[theorem]{Remark}
\newtheorem{assumption}[theorem]{Assumption}
\newtheorem{example}[theorem]{Example}
\theoremstyle{definition}
\newcommand{\gap}{\hspace{0.1cm}}
\newcommand{\cP}{\mathcal{P}}
\newcommand{\cS}{\mathcal{S}}
\newcommand{\cT}{\mathcal{T}}
\newcommand{\intO}{\int_{\Omega}}
\newcommand{\ES}{\mathrm{ES}}
\newcommand{\NS}{\mathrm{NS}}
\newcommand{\SSE}{\mathrm{SSE}}
\newcommand{\ASM}{\mathrm{ASM}}
\newcommand{\alt}{\mathrm{alt}}
\newcommand{\cN}{\mathcal{N}}
\let\div\relax
\DeclareMathOperator{\div}{div}
\journal{arXiv}
\begin{document}

\begin{frontmatter}
\title{Preconditioning for finite element methods with strain smoothing\tnoteref{acknowledgement}}
\tnotetext[acknowledgement]{
Chaemin Lee's work was supported by Basic Science Research Program through the National Research Foundation of Korea~(NRF) funded by the Ministry of Education~(No.~2021R1A6A3A01086822).
Jongho Park's work was supported by Basic Science Research Program through NRF funded by the Ministry of Education~(No.~2019R1A6A1A10073887).
}

\author[CL1,CL2]{Chaemin Lee\corref{cor}}
\address[CL1]{Department of Safety Engineering, Chungbuk National University, Chungbuk 28644, Korea}
\address[CL2]{Department of Mechanical Engineering, KAIST, Daejeon 34141, Korea}
\ead{clee@cbnu.ac.kr}

\author[JP]{Jongho Park}
\ead{jongho.park@kaist.ac.kr}
\ead[url]{https://sites.google.com/view/jonghopark}
\address[JP]{Natural Science Research Institute, KAIST, Daejeon 34141, Korea}

\cortext[cor]{Corresponding author}

\begin{abstract}
Strain smoothing methods such as the smoothed finite element methods~(S-FEMs) and the strain-smoothed element method~(SSE) have successfully improved the convergence behavior of finite elements.
The strain smoothing methods have been applied in numerous finite element analyses, especially for analyzing solids and structures; however, there have been no studies on efficient numerical solvers for the methods.
We need mathematically and numerically well-elaborated iterative solvers for efficient applications to large-scale problems.
In this study, we investigate how to design appropriate preconditioners for the methods with inspiration from the spectral properties of the strain smoothing methods.
First, we analyze the spectrums of the stiffness matrices of the edge-based S-FEM and SSE.
Subsequently, we propose improved two-level additive Schwarz preconditioners for the strain smoothing methods by modifying local solvers appropriately.
For convenience of implementation, an alternative form of the preconditioners is proposed by defining the coarse-scale operation in terms of the standard FEM. 
We verify our theoretical results through numerical experiments.
\end{abstract}


\begin{keyword}
Finite element method \sep
Smoothed finite element method \sep
Strain-smoothed element method \sep
Preconditioning \sep
Additive Schwarz method
\MSC[2020] 65F08 \sep 65N30 \sep 74S05 \sep 65N55
\end{keyword}
\end{frontmatter}


\section{Introduction}
\label{Sec:Introduction}
There have been various attempts to improve the performance of finite elements, among which strain smoothing methods can achieve the goal without introducing additional degrees of freedom.
Chen et al.~\cite{CWYY:2001} first proposed the concept of strain smoothing methods for the Galerkin mesh-free method.  
Subsequently, Liu et al.~\cite{LDN:2007} applied the strain smoothing technique to the finite element method~(FEM) and developed a series of smoothed FEMs~(S-FEMs). 
The S-FEMs are classified according to the construction of smoothing domains; the edge-based S-FEM~(ES-FEM) and node-based S-FEM~(NS-FEM) are well-known and broadly used~\cite{LNNL:2009,LNL:2009,LN:2010,HHB:2017,YWZJF:2019}.   
The ES-FEM generally exhibits the best convergence properties among the S-FEMs~\cite{LNL:2009}; the NS-FEM is effective in relieving volumetric locking~\cite{LNNL:2009}.
Several studies were conducted to establish the theoretical properties of the S-FEMs~\cite{LN:2010,NBN:2008,LNN:2010}.
Recently, the strain-smoothed element method~(SSE) has been developed~\cite{LL:2018,LL:2019,LKL:2021,LMP:2022,LLL:2022}. 
Whereas the S-FEMs construct strain fields for specifically defined smoothing domains, the SSE constructs strain fields for elements. 
The SSE provides a finite element solution with reduced discretization error by fully using the strains of all neighboring elements for strain smoothing.
A theoretical foundation for the convergence properties of the SSE has been established in~\cite{LP:2021}.

Although there has been a vast literature on the development of new strain smoothing methods and their applications to various engineering problems~(see~\cite{ZL:2018} for a recent survey), there have been no studies on efficient numerical solvers for the strain smoothing methods, to the best of our knowledge.
However, developing robust and efficient numerical solvers is critical for successful application of the methods to large-scale engineering problems~\cite{FR:1994}.
Particularly, iterative solvers are suitable for large-scale sparse linear problems~\cite{Saad:2003}.
Since the performance of iterative solvers relies on the condition number of a target linear system, an effective way to improve iterative solvers is to design good preconditioners.
In this perspective, there have been plenty of notable works on preconditioning of large-scale linear problems arising in structural mechanics; see, e.g.,~\cite{Smith:1992,KP:1998,GOS:2003,DW:2009}.

In this study, we examine how to design suitable preconditioners for the strain smoothing methods.
The main observation is that the stiffness matrices of the ES-FEM and SSE are spectrally equivalent to that of the standard FEM.
This observation guarantees that the ES-FEM and SSE can adopt any preconditioner designed for the standard FEM and enjoy the advantages of the preconditioner such as good conditioning or numerical scalability.
As a concrete example, we consider an overlapping Schwarz preconditioner, which is one of the most broadly used parallel preconditioners for finite element problems~\cite{TW:2005,DCPS:2013,Calvo:2019,CPS:2021}.
We prove that the standard two-level additive Schwarz preconditioner~\cite{TW:2005} designed for the standard FEM can be applied to the ES-FEM and SSE, satisfying the condition number bound $C( 1 + H/\delta)$, where $C$ is a positive constant independent of the mesh and subdomain sizes, $H$ is the subdomain size, and $\delta$ is the overlapping width for the overlapping domain decomposition associated with the additive Schwarz preconditioner.
Additionally, we propose novel improved two-level additive Schwarz preconditioners for the ES-FEM and SSE with better condition number estimates than the standard two-level additive Schwarz preconditioner.
With some simple modifications on the local problems of the standard Schwarz preconditioner, we obtain the proposed preconditioners that show improved performance in both theoretical and numerical senses.
The improvement strategy can be applied to not only additive Schwarz preconditioners but also a broad range of subspace correction preconditioners~\cite{Xu:1992,Xu:2001} such as multigrid and domain decomposition preconditioners.
Notably, several existing iterative solvers for linear systems fit into the framework of subspace correction~\cite{Xu:1992}; the improvement strategy introduced in this study reveals new possibilities for designing efficient iterative solvers for various contemporary FEMs.
Numerical results verify the theories presented in this study and prove the superiority of the proposed improved preconditioners.

This study includes an interesting remark on the NS-FEM; although the NS-FEM and ES-FEM are considered members of the common class of S-FEMs, their spectral properties may differ significantly.
In this study, we claim that the stiffness matrix of the NS-FEM may not be spectrally equivalent to that of the standard FEM.
Specifically, we present an example that the condition number $\kappa (K^{-1} \bar{K}_{\NS})$ increases as the mesh size $h$ decreases, where $K$ and $\bar{K}_{\NS}$ are the stiffness matrices of the standard FEM and NS-FEM, respectively.
This suggests the need to develop different mathematical theories for the NS-FEM and ES-FEM. 
However, most of the existing theories~\cite{LN:2010,LNN:2010,ZL:2018} are based on a unified S-FEM framework.

The remainder of this paper is organized as follows.
In Section~\ref{Sec:FEM}, we summarize the key features of the ES-FEM and SSE.
Section~\ref{Sec:Spectral} deals with the spectral properties of the ES-FEM and SSE; specifically, we demonstrate that the stiffness matrices of these methods are spectrally equivalent to that of the standard FEM.
Utilizing the spectral equivalence, in Section~\ref{Sec:Improvement}, we present efficient two-level additive Schwarz preconditioners for the ES-FEM and SSE and analyze their convergence properties.
Section~\ref{Sec:Numerical} presents numerical results that support the theoretical findings.
In Section~\ref{Sec:NS}, we give some remarks on the spectral property of the NS-FEM.
We conclude the study in Section~\ref{Sec:Conclusion}.

\section{Finite element methods with strain smoothing}
\label{Sec:FEM}
We provide brief descriptions of the S-FEM and SSE for a model Poisson problem
\begin{equation} \begin{split} \label{model}
- \Delta u = f \quad &\textrm{ in } \Omega, \\
u=0 \quad &\textrm{ on } \partial \Omega,
\end{split} \end{equation}
where $\Omega \subset \mathbb{R}^2$ is a bounded polygonal domain.
For simplicity, we consider the case of three-node triangular elements throughout this study; see~\cite{NLN:2011} and~\cite{LKL:2021} for formulations of polygonal finite elements adopting the S-FEM and SSE, respectively.
For a subregion $\omega \subset \Omega$ and a nonnegative integer $n$, the collection of all polynomials of degree less than or equal to $n$ on $\omega$ is denoted by $\cP_n (\omega)$.
Let $\cT_h$ be a quasi-uniform triangulation of $\Omega$ with a characteristic element diameter $h > 0$.
We define $V_h$ as the conforming piecewise linear finite element space on $\cT_h$, i.e.,
\begin{equation*}
V_h = \left\{ v \in H_0^1 (\Omega) :  v|_{e} \in \cP_1 (e) \gap \forall e \in \cT_h \right\},
\end{equation*}
where $H_0^1 (\Omega)$ is the usual Sobolev space consisting of all functions $u \in L^2 ( \Omega)$ such that $\nabla u \in (L^2 (\Omega))^2$ and $u|_{\partial \Omega} = 0$.
We also define $W_h$ as follows:
\begin{equation*}
W_h = \left\{ \epsilon \in (L^2 (\Omega))^2 : \epsilon|_{e} \in (\cP_0 (e))^2 \gap \forall e \in \cT_h \right\}.
\end{equation*}
Then we readily have that $v \in V_h$ implies $\nabla v \in W_h$.
With a slight abuse of notation, we do not distinguish between finite element functions and the corresponding vectors of degrees of freedom in the following.

\subsection{Standard finite element method}
The geometry of a 3-node triangular element $e \in \cT_h$ is interpolated by
\begin{equation*}
(x,y) = \sum_{i=1}^3 h_{i}(r,s) ( x_{i}, y_{i}) \in e,
\end{equation*}
where $(x_i, y_i)$, $1 \leq i \leq 3$, is the position vector of the $i$th node of $e$ in the global Cartesian coordinate system, and $h_{i}(r,s)$ is the two-dimensional interpolation function of the standard isoparametric procedure corresponding to the $i$th node, that is, $h_{1}(r,s)=1-r-s$, $h_{2}(r,s)=r$, and $h_{3}(r,s)=s$. 
The corresponding interpolation of the function $u$ within the element $e$ is given by
\begin{equation*}
u(x, y) = \sum_{i=1}^3 h_{i}(r,s) u_{i},
\end{equation*}
where $u_{i} = u(x_i, y_i)$.
Note that $u$ is continuous and piecewise linear on $\cT_h$, i.e., $u \in V_h$. 

The local gradient $\epsilon^{(e)}$ within element $e$ is obtained through the standard isoparametric finite element procedure as follows:
\begin{equation}
\label{local_gradient}
\epsilon^{(e)}  = B^{(e)} u^{(e)} \quad \textrm{with}  \quad
B^{(e)} = \begin{bmatrix} \frac{\partial h_{1}}{\partial x} & \frac{\partial h_{2}}{\partial x} & \frac{\partial h_{3}}{\partial x} \\
\frac{\partial h_{1}}{\partial y} & \frac{\partial h_{2}}{\partial y} & \frac{\partial h_{3}}{\partial y} \end{bmatrix}, \gap
u^{(e)} = \begin{bmatrix} u_{1} & u_{2} & u_{3} \end{bmatrix}^T.
 \end{equation}
 
The stiffness matrix $K$ corresponding to the standard FEM is given by
\begin{equation}
\label{stiffness_standard}
u^T K v = \intO \nabla u \cdot \nabla v \, d\Omega =  \sum_{e \in \cT_h}  |e| \left( B^{(e)} u^{(e)} \right)^T \left( B^{(e)} v^{(e)} \right), \quad u,v \in V_h.
\end{equation}
A finite element solution corresponding to the standard FEM is given by a solution of a linear system
\begin{equation*}
Ku = f,
\end{equation*}
where the load vector $f$ is defined as
\begin{equation}
\label{load_vector}
f^T v = \intO fv \,d \Omega, \quad v \in V_h.
\end{equation}

\begin{figure}[]
\centering
\includegraphics[width=1\textwidth]{./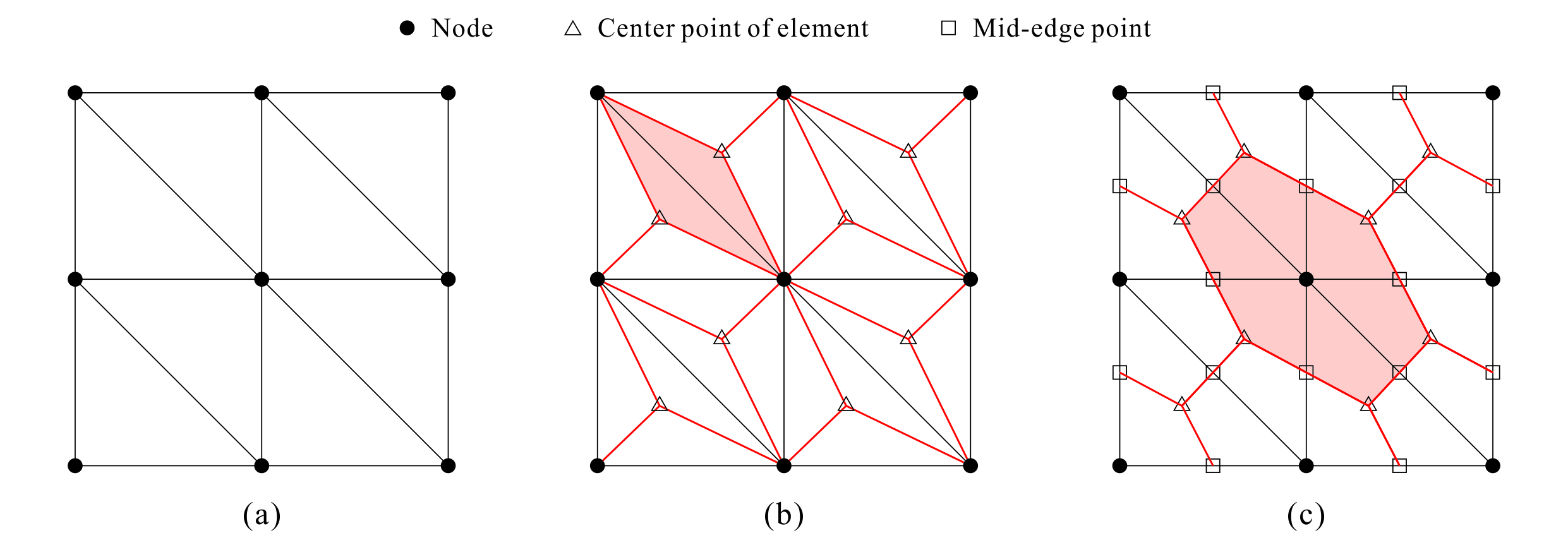}
\caption{Discretizations based on \textbf{(a)} finite elements, \textbf{(b)} edge-based smoothing domains, and \textbf{(c)} node-based smoothing domains.}
\label{Fig:domains}
\end{figure}

\subsection{Edge-based smoothed finite element method~(ES-FEM)}
\label{Subsec:ES}
The standard FEM discretizes a region into finite elements~(see~Fig.~\ref{Fig:domains}(a)), whereas S-FEM performs discretization based on newly defined smoothing domains.
The well-known S-FEMs are the ES-FEM and NS-FEM, which form the smoothing domains based on the edges and nodes of $\cT_h$, respectively. 
We briefly introduce the ES-FEM; see Section~\ref{Sec:NS} for a description of the NS-FEM.

\begin{figure}[]
\centering
\includegraphics[width=0.8\textwidth]{./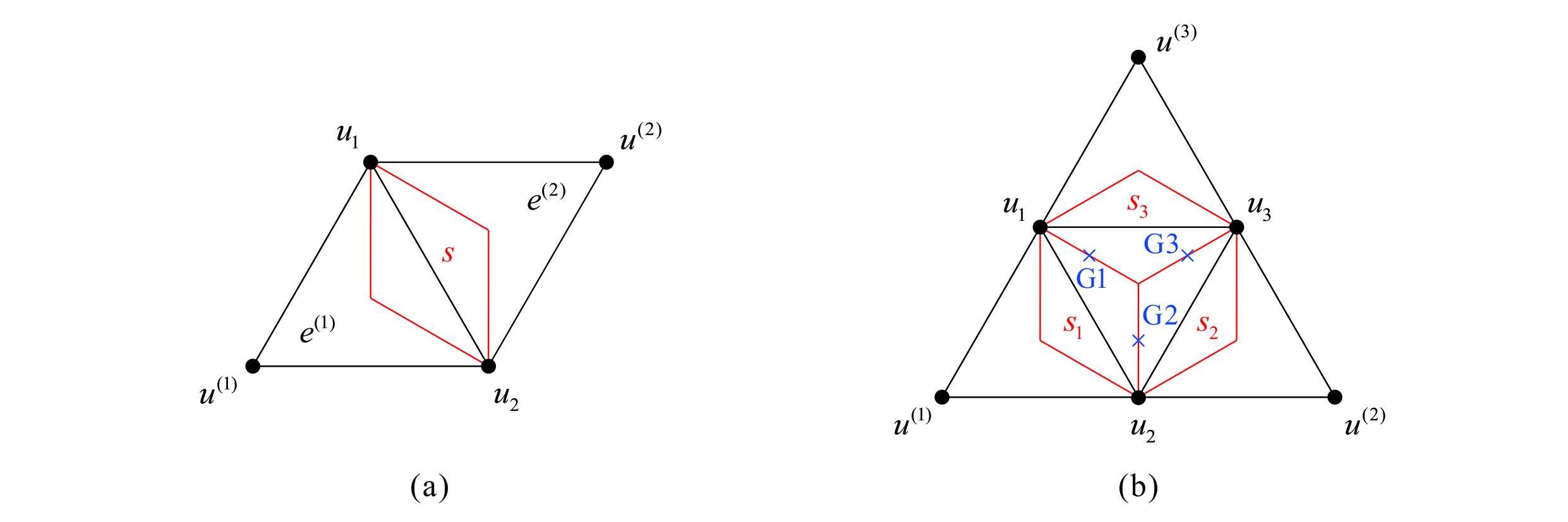}
\caption{Degrees of freedom of $u \in V_h$ corresponding to the vectors \textbf{(a)} $\bar{u}^{(s)}$ in~\eqref{local_gradient_ES} and \textbf{(b)} $\bar{u}^{(e)}$ in~\eqref{local_gradient_SSE}.}
\label{Fig:dofs}
\end{figure}

In the ES-FEM, each element in $\cT_h$ is divided into three triangular subdomains using its nodes and the barycenter~($r=s=1/3$).
Subsequently, the edge-based smoothing domains are defined as assemblages of two neighboring subdomains belonging to different elements; see~Fig.~\ref{Fig:domains}(b).
In the following, let $\cS_{h,\ES}$ denote the collection of all smoothing domains constructed from $\cT_h$ for the ES-FEM.
We define $W_{h, \ES}$ as the collection of all piecewise constant vector fields on $\cS_{h, \ES}$, i.e.,
\begin{equation*}
W_{h,\ES} = \left\{ \epsilon \in  (L^2 (\Omega))^2 : \epsilon|_e \in (\cP_0 (e))^2 \gap \forall e \in \cS_{h, \ES} \right\}.
\end{equation*}
The ES-FEM smoothing operator $S_{h,\ES} \colon W_h \rightarrow W_{h,\ES}$ that maps a given gradient field $\epsilon \in W_h$ to the corresponding smoothed gradient field $\bar{\epsilon} \in W_{h,\ES}$ is defined as follows. 
The local smoothed gradient $\bar\epsilon^{(s)}$ for a smoothing domain $s \in \cS_{h,\ES}$ is defined by
\begin{equation}
\label{ES_gradient}
\bar\epsilon^{(s)} = \frac{|e^{(1)}| \epsilon^{(e^{(1)})} + |e^{(2)}| \epsilon^{(e^{(2)})}}{ |e^{(1)}| + |e^{(2)}|},
\end{equation}
where $e^{(1)}$ and $e^{(2)}$ are the elements in $\cT_h$ sharing the edge corresponding to $s$, and $\epsilon^{(e^{(1)})}$ and $\epsilon^{(e^{(2)})}$ were defined in~\eqref{local_gradient}.
The local smoothed gradient in~\eqref{ES_gradient} can be expressed in a matrix-vector form as
\begin{equation}
\label{local_gradient_ES}
\bar\epsilon^{(s)} = \bar{B}_{\ES}^{(s)} \bar{u}^{(s)}
\quad \textrm{with} \quad
 \bar{B}_{\ES}^{(s)} =  \frac{|e^{(1)}|}{|e^{(1)}| + |e^{(2)}|} B^{(e_1)} R^{(e_1)}
+ \frac{|e^{(2)}|}{|e^{(1)}| + |e^{(2)}|} B^{(e_2)} R^{(e_2)} , \gap
\bar{u}^{(s)} = \begin{bmatrix}u_1 & u_2 & u^{(1)} & u^{(2)} \end{bmatrix}^T,
\end{equation}
where the vector $\bar{u}^{(s)}$ consists of the four degrees of freedom of $u \in V_h$ at the nodes of the elements sharing the edge corresponding to $s$ as shown in Fig.~\ref{Fig:dofs}(a), $B^{(e_1)}$ and $B^{(e_2)}$ were defined in~\eqref{local_gradient}, and $R^{(e_1)} $ and $R^{(e_2)}$ are boolean matrices that extract the degrees of freedom corresponding to the elements $e_1$ and $e_2$, respectively, i.e.,
\begin{equation*}
R^{(e_1)} = \begin{bmatrix} 1 & 0 & 0 & 0 \\ 0 & 1 & 0 & 0 \\ 0 & 0 & 1 & 0 \end{bmatrix}, \gap
R^{(e_2)} = \begin{bmatrix} 1 & 0 & 0 & 0 \\ 0 & 1 & 0 & 0 \\ 0 & 0 &  0 & 1 \end{bmatrix}.
\end{equation*}

Finally, the stiffness matrix $\bar{K}_{\ES}$ for the ES-FEM can be obtained as
\begin{equation}
\label{stiffness_ES}
u^T \bar{K}_{\ES} v = \intO \bar{\nabla}_{\ES} u \cdot \bar{\nabla}_{\ES} v \,d\Omega
= \sum_{s \in \cS_{h,\ES}} |s| \left( \bar{B}_{\ES}^{(s)} \bar{u}^{(s)} \right)^T \left( \bar{B}_{\ES}^{(s)} \bar{v}^{(s)} \right),
\quad u,v \in V_h,
\end{equation}
where $\bar{\nabla}_{\ES}$ denotes the global smoothed gradient operator corresponding to~\eqref{ES_gradient}, i.e., $\bar{\nabla}_{\ES} = S_{h, \ES} \nabla$.
We obtain an ES-FEM finite element solution by solving a linear system
\begin{equation*}
\bar{K}_{\ES} u = f,
\end{equation*}
where the load vector $f$ was given in~\eqref{load_vector}.
That is, the ES-FEM uses an alternative stiffness matrix $\bar{K}_{\ES}$, whereas its load vector $f$ is the same as that of the standard FEM.
Among the various types of S-FEMs, it has been numerically verified that the ES-FEM is the most effective method in reducing the discretization error of the finite element solution; see, e.g.,~\cite{LNL:2009}.

\subsection{Strain-smoothed element method~(SSE)}
\label{Subsec:SSE}
When the SSE is employed, a smoothed gradient field is constructed for each element in $\cT_h$ and the gradient information in all elements adjacent to a target element is utilized.
No smoothing domains are required; the domain discretization with the SSE is the same as the one with the standard FEM.
The space $W_{h,\SSE}$ for the SSE smoothed gradient fields is given by
\begin{equation*}
W_{h,\SSE} = \left\{ \epsilon \in (L^2(\Omega))^2 : \epsilon|_e \in (\cP_1 (e))^2 \gap \forall e \in \cT_h \right\}.
\end{equation*}
Note that $W_{h,\SSE}$ includes piecewise linear polynomials, while $W_h$ only includes piecewise constant functions.
We present how to construct the SSE smoothed gradient $\bar{\epsilon} = S_{h,\SSE} \epsilon \in W_{h,\SSE}$ for $\epsilon \in W_h$, where $S_{h,\SSE} \colon W_h \rightarrow W_{h,\SSE}$ denotes the SSE smoothing operator.
For an element $e \in \cT_h$, there can be up to three neighboring elements in $\cT_h$ through element edges, say $e^{(k)}$, $1\leq k \leq 3$.
An intermediate smoothed gradient $\hat\epsilon^{(k)}$ between $e$ and its neighboring element $e^{(k)}$ is defined by
\begin{equation}
\label{intermediate_strain}
\hat\epsilon^{(k)} = \frac{|e| \epsilon^{(e)}+ |e^{(k)}| \epsilon^{(e^{(k)})}}{|e| + |e^{(k)}|},
\end{equation}
where  $\epsilon^{(e)}$ and $\epsilon^{(e^{(k)})}$ were defined in~\eqref{local_gradient}.
If there is no adjacent element $e^{(k)}$ for some $k$, we simply use $\hat\epsilon^{(k)}=\epsilon^{(e)}$.
Subsequently, we construct a linear smoothed gradient field $\bar{\epsilon}^{(e)}$ on the target element $e$ by unifying the intermediate smoothed gradients in~\eqref{intermediate_strain}.
The values are assigned at three Gaussian integration points $\mathrm{G}k$, $1 \leq k \leq 3$, of $e$ as the pointwise values of $\bar{\epsilon}^{(e)}$ as follows:
\begin{equation}
\label{SSE_gradient}
\bar{\epsilon}^{(e)}(\mathrm{G}k) = \frac{\hat\epsilon^{(k-1)}+\hat\epsilon^{(k)}}{2},
\end{equation}
with the convention $\hat\epsilon^{(0)}=\hat\epsilon^{(3)}$.
The smoothed gradient field $\bar{\epsilon}^{(e)}$ is uniquely determined within $e$ by linear interpolation of the pointwise values.
The local smoothed gradient in~\eqref{SSE_gradient} can be expressed in a matrix-vector form as
\begin{equation}
\label{local_gradient_SSE}
\bar\epsilon^{(e)} = \bar{B}_{\SSE}^{(e)} \bar{u}^{(e)}
\quad \textrm{with} \quad
\bar{B}_{\SSE}^{(e)} = \frac{1}{2} \begin{bmatrix}
\bar{B}_{\ES}^{(s_1)} R^{(s_1)} + \bar{B}_{\ES}^{(s_3)} R^{(s_3)} \\
\bar{B}_{\ES}^{(s_1)} R^{(s_1)} + \bar{B}_{\ES}^{(s_2)} R^{(s_2)} \\
\bar{B}_{\ES}^{(s_2)} R^{(s_2)} + \bar{B}_{\ES}^{(s_3)} R^{(s_3)}
\end{bmatrix}, \gap
\bar{u}^{(e)} = \begin{bmatrix} u_1 & u_2 & u_3 & u^{(1)} & u^{(2)} & u^{(3)} \end{bmatrix}^T, 
\end{equation}
where the vector $\bar{\epsilon}^{(e)}$ comprises three pointwise values $\bar{\epsilon}^{(e)} (\mathrm{G}1)$, $\bar{\epsilon}^{(e)} (\mathrm{G}2)$, and $\bar{\epsilon}^{(e)} (\mathrm{G}3)$, the vector $\bar{u}^{(e)}$ consists of at most six degrees of freedom of $u \in V_h$ at the nodes of $e$ and its neighboring elements~(see Fig.~\ref{Fig:dofs}(b)), the matrices $\bar{B}_{\ES}^{(s_k)}$, $1 \leq k \leq 3$, were defined in~\eqref{local_gradient_ES}, and $R^{(s_k)}$ are boolean matrices that extract the degrees of freedom corresponding to the subdomains $s_k$, i.e.,
\begin{equation*}
R^{(s_1)} = \begin{bmatrix} 1 & 0 & 0 & 0 & 0 & 0 \\ 0 & 1 & 0 & 0 & 0 & 0 \\ 0 & 0 & 0 & 1 & 0 & 0 \\ 0 & 0 & 1 & 0 & 0 & 0 \end{bmatrix}, \gap
R^{(s_2)} = \begin{bmatrix} 0 &1 & 0 & 0 & 0 & 0 \\ 0 & 0 & 1 & 0 & 0 & 0 \\ 0 & 0 & 0 & 0 & 1 & 0 \\ 1 & 0 & 0 & 0 & 0 & 0 \end{bmatrix}, \gap
R^{(s_3)} = \begin{bmatrix} 0 & 0 & 1 & 0 & 0 & 0 \\ 1 & 0 & 0 & 0 & 0 & 0 \\ 0 & 0 & 0 & 0 & 0 & 1 \\ 0 & 1 & 0 & 0 & 0 & 0 \end{bmatrix}.
\end{equation*}

In the SSE, the smoothed gradient field is constructed for each element in $\cT_h$.
The stiffness matrix $\bar{K}_{\SSE}$ for the SSE is calculated by
\begin{equation}
\label{stiffness_SSE}
u^T \bar{K}_{\SSE} v = \intO \bar{\nabla}_{\SSE} u \cdot \bar{\nabla}_{\SSE} v \,d\Omega
= \sum_{e \in \cT_h} \frac{|e|}{3} \left( \bar{B}_{\SSE}^{(e)} \bar{u}^{(e)} \right)^T \left( \bar{B}_{\SSE}^{(e)} \bar{v}^{(e)} \right),
\quad u,v \in V_h,
\end{equation}
where $\bar{\nabla}_{\SSE}$ denotes the global smoothed gradient operator corresponding to~\eqref{SSE_gradient}, i.e., $\bar{\nabla}_{\SSE} = S_{h,\SSE} \nabla$.
An SSE finite element solution is given by a solution of a linear system
\begin{equation*}
\bar{K}_{\SSE} u = f,
\end{equation*}
where the load vector $f$ was defined in~\eqref{load_vector}.

The strain-smoothed elements adopting the SSE have been verified to pass the three basic tests~(zero energy mode, isotropic element, and patch tests) and show improved convergence behaviors compared with other competitive elements in various numerical problems~\cite{LL:2018,LL:2019,LKL:2021}.

\begin{remark}
\label{Rem:consistent}
The ES-FEM and SSE are variationally consistent numerical schemes in the sense that they are derived from a particular variational principle for~\eqref{model}.
More precisely, it was proven in~\cite{LP:2021} that the ES-FEM and SSE are conforming Galerkin approximations of the following mixed variational principle: find $(u, \epsilon_1, \epsilon_2, \sigma_1, \sigma_2) \in V \times W \times W \times W \times W$ such that
\begin{equation} \begin{split} \label{VP}
\intO \sigma_1 \cdot \nabla v \,d \Omega + \intO ( - \sigma_1 + \sigma_2) \cdot \delta_1 \,d \Omega + \intO (\epsilon_2 - \sigma_2 ) \cdot \delta_2 \,d \Omega = \intO fv \,d \Omega
\quad &\forall v \in V, \gap \delta_1 \in W, \gap \delta_2 \in W, \\
\intO \tau_1 \cdot (\nabla u - \epsilon_1) \,d \Omega + \intO \tau_2 \cdot (\epsilon_1 - \epsilon_2) \,d \Omega = 0 
\quad &\forall \tau_1 \in W, \gap \tau_2 \in W,
\end{split} \end{equation}
where $V = H_0^1 (\Omega)$ and $W = (L^2 (\Omega))^2$; the equivalence between~\eqref{model} and~\eqref{VP} is presented in~\cite[Proposition~4.1]{LP:2021}.
Because of variational consistency, the existence and uniqueness of a solution for both methods can be proven and a discretization error bound can be obtained by invoking the standard theory of mixed FEMs~\cite{Ciarlet:2002,BS:2008}; see~\cite{LP:2021} for details.
\end{remark}

\section{Spectral equivalence among stiffness matrices}
\label{Sec:Spectral}
In this section, we prove that the stiffness matrices of the ES-FEM and SSE defined in~\eqref{stiffness_ES} and~\eqref{stiffness_SSE}, respectively, are spectrally equivalent to that of the standard FEM defined in~\eqref{stiffness_standard}.
The results of this section imply that the ES-FEM and SSE can adopt any preconditioner designed for the standard FEM without degrading the performance of the preconditioner.
In this sense, they are advantageous for use with preconditioned iterative schemes such as the preconditioned conjugate gradient method and other Krylov space methods~\cite{Saad:2003} compared to other FEMs with strain smoothing.
In contrast, in Section~\ref{Sec:NS}, we will demonstrate that the stiffness matrix of the NS-FEM is not spectrally equivalent to that of the standard FEM in general.

First, we present a simple but useful lemma that is required for the spectral analysis of the ES-FEM and SSE.

\begin{lemma}
\label{Lem:zero}
Let $\omega_1$ and $\omega_2$ be polygonal regions in $\mathbb{R}^2$ sharing an edge $f$, i.e., $\overline{\omega}_1 \cap \overline{\omega}_2 = f$.
If a continuous and piecewise linear function $u \colon \overline{\omega_1 \cup \omega_2} \rightarrow \mathbb{R}$ satisfies $w_1 \nabla u|_{\omega_1} + w_2 \nabla u|_{\omega_2} = 0$ for some $w_1$, $w_2 > 0$, then it is constant along $f$.
\end{lemma}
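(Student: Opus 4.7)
The plan is to translate the hypothesis into a statement about the two constant gradient vectors $g_i := \nabla u|_{\omega_i}$, and then combine it with the continuity of $u$ along the shared edge $f$.

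First I would observe that, since $u$ is linear on each of $\omega_1$ and $\omega_2$, each $g_i$ is a well-defined constant vector in $\mathbb{R}^2$. The hypothesis $w_1 g_1 + w_2 g_2 = 0$ with $w_1, w_2 > 0$ then immediately yields $g_2 = -(w_1/w_2)\, g_1$, so $g_1$ and $g_2$ are antiparallel (in particular, they span the same line through the origin).

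Next I would use continuity of $u$ across $f$. Let $\bt$ denote a unit tangent vector to $f$. The tangential derivative of $u$ along $f$ must agree when computed from either side, which is the elementary fact that if a continuous function has two linear pieces glued along a segment, the restrictions must coincide on that segment; consequently $g_1 \cdot \bt = g_2 \cdot \bt$. Combining with $g_2 = -(w_1/w_2)\, g_1$ gives
\begin{equation*}
\left( 1 + \frac{w_1}{w_2} \right) g_1 \cdot \bt = 0,
\end{equation*}
and since $1 + w_1/w_2 > 0$ this forces $g_1 \cdot \bt = 0$ (and likewise $g_2 \cdot \bt = 0$). Therefore the directional derivative of $u$ along $f$ vanishes on both sides, so $u$ is constant along $f$.

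This is really a two-line argument rather than a proof with a genuine obstacle; the only step that requires a moment of care is the tangential-continuity identity $g_1 \cdot \bt = g_2 \cdot \bt$, which I would justify briefly by picking two distinct points on $f$ and equating the values of $u$ computed from the linear formulas on each side. Everything else is algebraic manipulation of the relation imposed by the weighted-gradient hypothesis.
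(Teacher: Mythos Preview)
Your proof is correct and follows essentially the same idea as the paper's: both pick a unit tangent $\bt$ to $f$, use continuity to get $\nabla u|_{\omega_1}\cdot\bt = \nabla u|_{\omega_2}\cdot\bt$, and combine this with the weighted-sum hypothesis to force the tangential derivative to vanish. The only cosmetic difference is that the paper phrases it as a short proof by contradiction, whereas you argue directly.
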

\begin{proof}
Let $t$ be a unit vector along the direction of the edge $f$.
If we suppose that $u$ is not constant along $f$, then it follows that $\nabla u|_{\omega_1} \cdot t = \nabla u|_{\omega_2} \cdot t \neq 0$.
However, it contradicts $(w_1 \nabla u|_{\omega_1} + w_2 \nabla u|_{\omega_2}) \cdot t = 0$.
\end{proof}

Using Lemma~\ref{Lem:zero} and the fact that the strain smoothing operation of the ES-FEM is an orthogonal projection in $(L^2 (\Omega))^2$~\cite{LNN:2010}, we can prove the spectral equivalence between the stiffness matrices of the standard FEM and ES-FEM as follows.

\begin{theorem}
\label{Thm:ES}
The stiffness matrices $K$ and $\bar{K}_{\ES}$ of the standard FEM and ES-FEM defined in~\eqref{stiffness_standard} and~\eqref{stiffness_ES}, respectively, are spectrally equivalent.
That is, there exists two positive constants $\underline{C}$ and $\overline{C}$ independent of the mesh size $h$ such that
\begin{equation*}
\underline{C} u^T K u \leq u^T \bar{K}_{\ES} u \leq \overline{C} u^T K u \quad  \forall u \in V_h. 
\end{equation*}
\end{theorem}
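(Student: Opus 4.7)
My plan is to recast both quadratic forms as $L^2$-norms of gradient-like fields, use the $L^2$-orthogonal-projection structure of the smoothing operator for the upper bound, and exploit a tangential-preservation identity -- the quantitative content of Lemma~\ref{Lem:zero} -- for the lower bound. Writing $u^T K u = \|\nabla u\|_{L^2(\Omega)}^2$ and $u^T \bar{K}_{\ES} u = \|S_{h,\ES} \nabla u\|_{L^2(\Omega)}^2$, I first note that, because the smoothing domains of $\cS_{h,\ES}$ partition $\Omega$ and $\bar\epsilon^{(s)}$ given by~\eqref{ES_gradient} is exactly the integral mean of $\epsilon \in W_h$ over $s$, the operator $S_{h,\ES}$ coincides with the $L^2$-orthogonal projection from $(L^2(\Omega))^2$ onto $W_{h,\ES}$. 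Since an orthogonal projection is a contraction, this immediately yields the upper bound $u^T \bar{K}_{\ES} u \leq u^T K u$ with $\overline{C} = 1$.

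For the lower bound, the central observation is that $S_{h,\ES}$ preserves tangential derivatives along every edge. For an interior smoothing domain $s$ joining elements $e_1$, $e_2$ across an edge $f$ with unit tangent $t_f$, continuity of $u \in V_h$ implies $\nabla u|_{e_1} \cdot t_f = \nabla u|_{e_2} \cdot t_f$, so the convex combination in~\eqref{ES_gradient} inherits this common tangential value: $\bar\epsilon^{(s)} \cdot t_f = \nabla u \cdot t_f$. This is the quantitative counterpart of Lemma~\ref{Lem:zero}; for boundary smoothing domains $\bar\epsilon^{(s)} = \nabla u|_e$, which is trivially fine. Splitting each $s$ into its two constituent sub-triangles of area $|e|/3$ and regrouping the sum by elements gives
\begin{equation*}
\|S_{h,\ES} \nabla u\|_{L^2(\Omega)}^2 = \sum_{e \in \cT_h} \frac{|e|}{3} \sum_{i=1}^{3} |\bar\epsilon^{(s_i^e)}|^2 \geq \sum_{e \in \cT_h} \frac{|e|}{3}\, (\nabla u|_e)^T T_e\, (\nabla u|_e),
\end{equation*}
where $s_i^e$ is the smoothing domain incident to the $i$-th edge of $e$ and $T_e = \sum_{i=1}^{3} t_i^e (t_i^e)^T$. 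The three edge-tangents of any non-degenerate triangle span $\mathbb{R}^2$, so the $2 \times 2$ matrix $T_e$ is positive definite; the minimum-angle condition implicit in the quasi-uniformity of $\cT_h$ yields a uniform bound $\lambda_{\min}(T_e) \geq c_0 > 0$ independent of $h$. Summing then gives $u^T \bar{K}_{\ES} u \geq (c_0/3)\, u^T K u$.

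The main subtlety is that the local tangential-only estimate discards the normal component of $\bar\epsilon^{(s)}$ entirely, so the bound may degenerate on a single smoothing domain in isolation -- one cannot compare element-by-element. The argument circumvents this by aggregating contributions from all three edges of each element before projecting onto the tangents; the spanning property of the three edge-directions then restores coercivity at the element level, and the resulting constant depends only on the shape-regularity of $\cT_h$.
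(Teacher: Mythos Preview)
Your proof is correct and takes a genuinely different route from the paper's. Both arguments obtain $\overline{C}=1$ the same way, via the $L^2$-projection property of $S_{h,\ES}$. For the lower bound, however, the paper proceeds indirectly: it uses Lemma~\ref{Lem:zero} only qualitatively to show that $\int_e |\bar{\nabla}_{\ES}u|^2\,d\Omega=0$ forces $\int_e|\nabla u|^2\,d\Omega=0$, then invokes an implicit finite-dimensional argument to extract a positive constant $C_e$, and finally runs a scaling argument on a reference configuration to verify that $C_e$ is $h$-independent. Your argument is more direct and constructive: you extract the \emph{quantitative} content of Lemma~\ref{Lem:zero}, namely the identity $\bar\epsilon^{(s)}\cdot t_f=\nabla u|_e\cdot t_f$, discard the normal component, and obtain the explicit element-level bound governed by the tangent matrix $T_e=\sum_i t_i^e(t_i^e)^T$. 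This buys you two things. First, the constant $\underline{C}=c_0/3$ with $c_0=\min_e\lambda_{\min}(T_e)$ is explicit and depends only on the angles of each element $e$, not on the neighboring elements or their areas (whereas the paper's $C_e$ a priori depends on the full patch $e,e^{(1)},e^{(2)},e^{(3)}$). Second, no scaling argument is needed: scale-invariance is immediate because $T_e$ depends only on edge \emph{directions}. The paper's approach, on the other hand, is more readily portable to other smoothing operators where an analogous tangential identity may not be available and only the kernel implication survives.
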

\begin{proof}
We recall that the ES-FEM smoothing operator $S_{h,\ES}$ defined in Section~\ref{Subsec:ES} is an $(L^2 (\Omega))^2$-orthogonal projection.
More precisely, it was shown in~\cite[Remark~4]{LNN:2010} that
\begin{equation*}
S_{h,\ES} \epsilon = P_{h,\ES} \epsilon \quad \forall \epsilon \in W_h,
\end{equation*}
where $P_{h,\ES} \colon (L^2 (\Omega))^2 \rightarrow W_{h,\ES}$ is the $(L^2 (\Omega))^2$-orthogonal projection onto $W_{h,\ES}$.
Then it follows that
\begin{equation*}
u^T \bar{K}_{\ES} u = \intO |\bar{\nabla}_{\ES} u|^2 \,d\Omega
= \intO |P_{h,\ES} \nabla u|^2 \,d \Omega
 \leq \intO |\nabla u|^2 \,d\Omega = u^T K u \quad \forall u \in V_h.
\end{equation*}
Consequently, we have $\overline{C} = 1$.
Next, we estimate $\underline{C}$.
For any element $e \in \cT_h$ intersecting three smoothing domains $s^{(k)} \in \cS_{h,\ES}$, $1 \leq k \leq 3$, we have
\begin{equation*}
\int_e |\bar{\nabla}_{\ES} u|^2 \,d\Omega
= \sum_{k=1}^3 \int_{e \cap s^{(k)}} |\bar{\nabla}_{\ES} u|^2 \,d\Omega
= \frac{|e|}{3} \sum_{k=1}^3 (\hat{\epsilon}^{(k)})^T \hat{\epsilon}^{(k)},
\end{equation*}
where $\hat{\epsilon}^{(k)}$ was given in~\eqref{intermediate_strain}.
In order for $\int_e |\bar{\nabla}_{\ES} u|^2 \,d\Omega$ to be zero, we must have $\hat{\epsilon}^{(k)} = 0$ for all $k$.
If $s^{(k)} \subset e$ for some $k$, i.e., if $e$ is a boundary element, then we have
\begin{equation*}
\epsilon^{(e)} = \hat{\epsilon}^{(k)} = 0,
\end{equation*}
where $\epsilon^{(e)}$ was defined in~\eqref{local_gradient}.
If $e$ is an interior element with three adjacent elements, Lemma~\ref{Lem:zero} implies that $u$ is constant along all the edges of $e$, such that $u$ is constant on $e$ and
\begin{equation*}
\int_e |\nabla u|^2 \,d\Omega = |e|(\epsilon^{(e)})^T \epsilon^{(e)} = 0, 
\end{equation*}
Up to this point, we have shown that
\begin{equation*}
\int_e |\bar{\nabla}_{\ES} u|^2 \,d\Omega = 0  \quad \textrm{implies} \quad \int_e |\nabla u|^2 \,d\Omega = 0.
\end{equation*}
Hence there exists a positive constant $C_e$ such that
\begin{equation}
\label{ES_local}
\int_e |\bar{\nabla}_{\ES} u|^2 \,d\Omega
\geq C_e \int_e |\nabla u|^2 \,d\Omega  \quad \forall u \in V_h.
\end{equation}

Now, we verify that the constant $C_e$ in~\eqref{ES_local} is independent of the mesh size $h$ using a scaling argument~(cf.~\cite[Section~3.4]{TW:2005}).
The transformation $x = h\hat{x}$ maps the domain $\hat{\Omega} = h^{-1}\Omega$ with the same shape as $\Omega$ into $\Omega$.
The domain $\hat{\Omega}$ naturally admits a triangulation
\begin{equation*}
\hat{\cT}_1 = \left\{ \hat{e} = h^{-1}e : e \in \cT_h \right\},
\end{equation*}
whose characteristic element diameter is $1$.
Spaces $\hat{V}_1$ and $\hat{W}_1$ are defined as
\begin{align*}
\hat{V}_1 &= \left\{ \hat{v} \in H_0^1 (\hat{\Omega}) : \hat{v}|_{\hat{e}} \in \cP_1 (\hat{e}) \gap \forall \hat{e} \in \hat{\cT}_1 \right\}, \\
\hat{W}_1 &= \left\{ \hat{\epsilon} \in (L^2 (\hat{\Omega}))^2 : \hat{\epsilon}|_{\hat{e}} \in (\cP_0 (\hat{e}))^2 \gap \forall \hat{e} \in \hat{\cT}_1 \right\}.
\end{align*}
We define an ES-FEM smoothing operator $\hat{S}_{1,\ES}$ on $\hat{W}_1$ in the same manner as $S_{h,\ES}$.
The inequality~\eqref{ES_local} is valid for $\hat{e} = h^{-1} e \in \hat{\cT}_1$, with a constant $\hat{C}_{\hat{e}}$ that only depends on $\hat{e}$ and $\hat{e}^{(k)} = H^{-1} e^{(k)}$~($1 \leq k \leq 3$), i.e., only on the geometries of $e$ and $e^{(k)}$:
\begin{equation}
\label{ES_local_scaled}
\int_{\hat{e}} |\hat{S}_{1,\ES} \nabla \hat{u}|^2 \,d \hat{\Omega} \geq \hat{C}_{\hat{e}} \int_{\hat{e}} |\nabla \hat{u}|^2 \, d\hat{\Omega} \quad \forall \hat{u} \in \hat{V}_1.
\end{equation}
We observe that
\begin{equation}
\label{scaling_argument}
\bar{\nabla}_{\ES}u |_{e \cap e^{(k)}} = \frac{|e|}{|e| + |e^{(k)}|} \nabla u|_e + \frac{|e^{(k)}|}{|e| + |e^{(k)}|} \nabla u|_{e^{(k)}}
= \frac{|\hat{e}|}{|\hat{e}| + |\hat{e}^{(k)}|} h^{-1} \nabla \hat{u}|_{\hat{e}} + \frac{|\hat{e}^{(k)}|}{|\hat{e}| + |\hat{e}^{(k)}|} h^{-1} \nabla \hat{u}|_{\hat{e}^{(k)}} = h^{-1} \hat{S}_{1,\ES} \nabla \hat{u} |_{\hat{e} \cap \hat{e}^{(k)}},
\end{equation}
where $\hat{u}(\hat{x}) = u(h\hat{x})$ is a transformed function.
It follows that
\begin{equation*}
\int_e |\bar{\nabla}_{\ES} u|^2 \,d\Omega
\stackrel{\eqref{scaling_argument}}{=} \int_{\hat{e}} |h^{-1} \hat{S}_{1,\ES} \nabla \hat{u}|^2 h^2 \, d \hat{\Omega}
\stackrel{\eqref{ES_local_scaled}}{\geq} \hat{C}_{\hat{e}} \int_{\hat{e}} |\nabla \hat{u}|^2 \,d \hat{\Omega}
= \hat{C}_{\hat{e}} \int_{e} |h \nabla u|^2 h^{-2} \,d \Omega.
\end{equation*}
Hence, the inequality~\eqref{ES_local} holds with $C_e = \hat{C}_{\hat{e}}$, i.e., $C_e$ is independent of the mesh size $h$ and depends only on the geometries of $e$ and $e^{(k)}$.

Since $\cT_h$ is quasi-uniform, the constant $C_e$ in~\eqref{ES_local} has a uniform positive lower bound, say $\underline{C}$, over all $e \in \cT_h$.
For any $u \in V_h$, we have
\begin{equation*}
u^T \bar{K}_{\ES} u 
= \sum_{e \in \cT_h} \int_e |\bar{\nabla}_{\ES} u|^2 \,d\Omega
\geq \underline{C} \sum_{e \in \cT_h} \int_e |\nabla u|^2 \,d\Omega
= \underline{C} u^T K u,
\end{equation*}
which completes the proof.
\end{proof}

A useful consequence of Theorem~\ref{Thm:ES} is that any preconditioner for the standard FEM works for the ES-FEM.
Corollary~\ref{Cor:ES} presents a rigorous statement on the performance of a preconditioner applied to the ES-FEM.
Note that, for a symmetric and positive definite matrix $A$, $\kappa (A)$ denotes the condition number of $A$, i.e.,
\begin{equation*}
\kappa (A) = \frac{\lambda_{\max}(A)}{\lambda_{\min}(A)},
\end{equation*}
where $\lambda_{\min} (A)$ and $\lambda_{\max} (A)$ are the minimum and maximum eigenvalues of $A$, respectively.

\begin{corollary}
\label{Cor:ES}
Any preconditioner $M^{-1}$ for the standard FEM $K u = f$ works for the ES-FEM $\bar{K}_{\ES} u = f$ as well.
More precisely, there exists a positive constant $C$ independent of the mesh size $h$ such that
\begin{equation*}
\kappa (M^{-1} \bar{K}_{\ES}) \leq C \kappa (M^{-1} K).
\end{equation*}
\end{corollary}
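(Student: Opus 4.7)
My plan is to derive the corollary as a short algebraic consequence of the spectral equivalence established in Theorem~\ref{Thm:ES}. Assuming that $M$ is symmetric positive definite, as is the case for any practical preconditioner for the symmetric positive definite system $Ku = f$, the extreme eigenvalues of the pencils $(K, M)$ and $(\bar{K}_{\ES}, M)$ admit the Rayleigh quotient characterizations
\begin{equation*}
\lambda_{\max}(M^{-1}A) = \max_{u \neq 0} \frac{u^T A u}{u^T M u}, \qquad \lambda_{\min}(M^{-1}A) = \min_{u \neq 0} \frac{u^T A u}{u^T M u},
\end{equation*}
for $A \in \{K, \bar{K}_{\ES}\}$. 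Dividing the two-sided bound of Theorem~\ref{Thm:ES} by $u^T M u > 0$ and taking suprema and infima over nonzero $u$ then yields
\begin{equation*}
\underline{C}\,\lambda_{\min}(M^{-1}K) \leq \lambda_{\min}(M^{-1}\bar{K}_{\ES}), \qquad \lambda_{\max}(M^{-1}\bar{K}_{\ES}) \leq \overline{C}\,\lambda_{\max}(M^{-1}K),
\end{equation*}
and dividing the second by the first produces $\kappa(M^{-1}\bar{K}_{\ES}) \leq (\overline{C}/\underline{C})\,\kappa(M^{-1}K)$, which is precisely the claim with $C = \overline{C}/\underline{C}$.

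There is no real obstacle here; the entire argument is a two-line chain of inequalities once Theorem~\ref{Thm:ES} is in hand, and the constant $C$ inherits the $h$-independence of $\underline{C}$ and $\overline{C}$ automatically. The only point worth flagging is that $M^{-1}K$ and $M^{-1}\bar{K}_{\ES}$ are generally nonsymmetric, so the symbol $\kappa$ in the statement must be interpreted via the symmetric generalized eigenvalue problem $A u = \lambda M u$, equivalently through the symmetric matrix $M^{-1/2} A M^{-1/2}$. Since this is the standard convention in the analysis of preconditioned iterative methods, no hypothesis beyond symmetric positive definiteness of $M$ need be imposed, and the resulting bound governs the convergence rate of, e.g., the preconditioned conjugate gradient method applied to $\bar{K}_{\ES} u = f$ with preconditioner $M^{-1}$.
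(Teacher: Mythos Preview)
Your proof is correct and follows essentially the same approach as the paper. The paper invokes a cited multiplicative inequality $\kappa(M^{-1}\bar{K}_{\ES}) \leq \kappa(K^{-1}\bar{K}_{\ES})\,\kappa(M^{-1}K)$ from~\cite[Corollary~C.2]{TW:2005} and then bounds $\kappa(K^{-1}\bar{K}_{\ES}) \leq \overline{C}/\underline{C}$ via Theorem~\ref{Thm:ES}; your Rayleigh-quotient argument is precisely the standard proof of that cited inequality, and you arrive at the same constant $C = \overline{C}/\underline{C}$.
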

\begin{proof}
By~\cite[Corollary~C.2]{TW:2005}, it follows that
\begin{equation*}
\kappa (M^{-1} \bar{K}_{\ES}) \leq \kappa (K^{-1} \bar{K}_{\ES})  \kappa (M^{-1} K) \leq \frac{\overline{C}}{\underline{C}}  \kappa (M^{-1} K),
\end{equation*}
where $\underline{C}$ and $\overline{C}$ were given in Theorem~\ref{Thm:ES}.
Setting $C = \overline{C}/\underline{C}$ completes the proof.
\end{proof}

We provide a detailed explanation for Corollary~\ref{Cor:ES}.
Suppose that we have a preconditioner $M^{-1}$ for the standard FEM such that $\kappa ( M^{-1} K) = O(h^{-\alpha})$ for some $\alpha \geq 0$.
Then, Corollary~\ref{Cor:ES} implies that preconditioning the stiffness matrix of the ES-FEM  by $M^{-1}$ yields the same condition number estimate as the standard FEM, i.e., $\kappa ( M^{-1} \bar{K}_{\ES}) = O(h^{-\alpha})$.
Therefore, it is ensured that any preconditioned iterative scheme for the ES-FEM reaches a target accuracy within the same number of iterations up to a multiplicative constant as the case of the standard FEM.

Similar results can be obtained for the SSE.
The spectral equivalence between the stiffness matrices of the standard FEM and SSE can be deduced by invoking the fact that the strain smoothing step of the SSE can be represented as a composition of orthogonal projection operators among some assumed strain spaces~\cite{LP:2021}.

\begin{theorem}
\label{Thm:SSE}
The stiffness matrices $K$ and $\bar{K}_{\SSE}$ of the standard FEM and SSE defined in~\eqref{stiffness_standard} and~\eqref{stiffness_SSE}, respectively, are spectrally equivalent.
That is, there exists two positive constants $\underline{C}$ and $\overline{C}$ independent of the mesh size $h$ such that
\begin{equation*}
\underline{C} u^T K u \leq u^T \bar{K}_{\SSE} u \leq \overline{C} u^T K u, \quad  u \in V_h. 
\end{equation*}
\end{theorem}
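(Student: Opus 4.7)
My plan is to mimic the structure of the proof of Theorem~\ref{Thm:ES}, replacing the single-projection identity for $S_{h,\ES}$ with the composition-of-projections representation of $S_{h,\SSE}$ established in~\cite{LP:2021}, and adapting the local kernel analysis to the element-wise linear smoothed gradient produced by the SSE.

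For the upper bound, I would invoke the result from~\cite{LP:2021} that $S_{h,\SSE}$ factors as a composition of $(L^2(\Omega))^2$-orthogonal projections onto certain assumed-strain spaces. Since each such projection is nonexpansive in $(L^2(\Omega))^2$, the composition is also nonexpansive, which yields $\intO |\bar{\nabla}_{\SSE} u|^2 \,d\Omega \leq \intO |\nabla u|^2 \,d\Omega$ and hence $\overline{C}=1$.

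For the lower bound, I would work element by element. Since $\bar{\nabla}_{\SSE} u|_e \in (\cP_1(e))^2$, its squared magnitude lies in $\cP_2(e)$, and the three-point Gauss rule is exact on $\cP_2(e)$, giving
\begin{equation*}
\int_e |\bar{\nabla}_{\SSE} u|^2 \,d\Omega = \frac{|e|}{3} \sum_{k=1}^3 |\bar{\epsilon}^{(e)}(\mathrm{G}k)|^2 = \frac{|e|}{12} \sum_{k=1}^3 |\hat{\epsilon}^{(k-1)} + \hat{\epsilon}^{(k)}|^2.
\end{equation*}
If this integral vanishes, the three relations $\hat{\epsilon}^{(k-1)} + \hat{\epsilon}^{(k)} = 0$ (with the convention $\hat{\epsilon}^{(0)}=\hat{\epsilon}^{(3)}$) form a $3 \times 3$ linear system whose only solution is $\hat{\epsilon}^{(k)} = 0$ for $k = 1, 2, 3$. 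From here the argument mirrors Theorem~\ref{Thm:ES}: if some neighbor $e^{(k)}$ is missing, the SSE convention $\hat{\epsilon}^{(k)} = \epsilon^{(e)}$ forces $\epsilon^{(e)} = 0$ directly; otherwise Lemma~\ref{Lem:zero} applied at each shared edge of $e$ forces $u$ to be constant along every edge of $e$, so $\nabla u|_e = 0$. This yields a local bound $\int_e |\bar{\nabla}_{\SSE} u|^2 \,d\Omega \geq C_e \int_e |\nabla u|^2 \,d\Omega$, and the scaling argument used for~\eqref{ES_local_scaled}--\eqref{scaling_argument} shows $C_e$ depends only on the reference geometries of $e$ and its neighbors. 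Quasi-uniformity of $\cT_h$ then furnishes a uniform positive lower bound $\underline{C}$.

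I expect the kernel analysis to be the most delicate step: the SSE smoothed gradient on a target element depends on degrees of freedom at up to six nodes drawn from $e$ and its (possibly missing) neighbors, so the zero-set argument must handle interior and boundary configurations in a unified way. Fortunately, the reduction through the three intermediate gradients $\hat{\epsilon}^{(k)}$ reproduces exactly the algebraic skeleton already treated in the ES-FEM proof, so Lemma~\ref{Lem:zero} closes the argument with no further machinery.
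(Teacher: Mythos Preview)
Your proposal is correct and follows essentially the same route as the paper's proof: the upper bound via the composition-of-projections identity from~\cite{LP:2021}, and the lower bound via the elementwise kernel argument reducing $\hat{\epsilon}^{(k-1)}+\hat{\epsilon}^{(k)}=0$ to $\hat{\epsilon}^{(k)}=0$, then invoking Lemma~\ref{Lem:zero} and the scaling argument of Theorem~\ref{Thm:ES}. Your justification of the Gauss-point integral formula is slightly more explicit than the paper's, but the logic is identical.
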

\begin{proof}
Let $\cS_{h, \SSE}$ be the collection of quadrilaterals formed by joining the centroid and midpoints of the edges of each element in $\cT_h$; see~\cite[Figure~3(c)]{LP:2021}.
The collection of all piecewise constant vector fields on $\cS_{h,\SSE}$ is denoted by $W_{h,\SSE}'$, i.e.,
\begin{equation*}
W_{h,\SSE}' = \left\{ \epsilon \in (L^2 (\Omega))^2 : \epsilon|_e \in (\cP_0 (e))^2 \gap \forall e \in \cS_{h, \SSE} \right\}.
\end{equation*}
Additionally, let $P_{h,\SSE} \colon (L^2 (\Omega))^2 \rightarrow W_{h, \SSE}'$ denote the $(L^2 (\Omega))^2$-orthogonal projection onto $W_{h, \SSE}'$.
Then the SSE smoothing operator $S_{h,\SSE}$ defined in Section~\ref{Subsec:SSE} satisfies the following equality~\cite[Theorem~3.3]{LP:2021}:
\begin{equation*}
\intO |S_{h,\SSE} \epsilon|^2 \,d \Omega = \intO |P_{h,\SSE} P_{h,\ES} \epsilon|^2 \,d \Omega \quad \forall \epsilon \in W_h,
\end{equation*}
where $P_{h,\ES}$ was defined in the proof of Theorem~\ref{Thm:SSE}.
Hence, we deduce that $u^T \bar{K}_{\SSE} u \leq u^T K u$ for all $u \in V_h$, i.e., $\overline{C} =1$.
In order to prove the $\underline{C}$-inequality, it suffices to show that 
\begin{equation}
\label{SSE_local}
\int_e |\bar{\nabla}_{\SSE} u|^2 \,d\Omega = 0  \quad \textrm{implies} \quad \int_e |\nabla u|^2 \,d\Omega = 0
\end{equation}
for any $e \in \cT_h$; if we show~\eqref{SSE_local}, we can deduce the $\underline{C}$-inequality by the same argument as in Theorem~\ref{Thm:ES}.
We take an element $e \in \cT_h$ and suppose that $\int_e |\bar{\nabla}_{\SSE} u|^2 \,d\Omega = 0$.
Observing that
\begin{equation*}
\int_e |\bar{\nabla}_{\SSE} u|^2 \,d\Omega = \frac{|e|}{3} \sum_{k=1}^3 \left( \frac{\hat\epsilon^{(k-1)}+\hat\epsilon^{(k)}}{2} \right)^T \left( \frac{\hat\epsilon^{(k-1)}+\hat\epsilon^{(k)}}{2} \right),
\end{equation*}
where $\hat{\epsilon}^{(k)}$ was defined in~\eqref{intermediate_strain}, we have $\hat{\epsilon}^{(k-1)} + \hat{\epsilon}^{(k)} = 0$ for all $k$.
Equivalently, we get $\hat{\epsilon}^{(k)} = 0$ for all $k$.
If $e$ is a boundary element, i.e., $\epsilon^{(e)} = \hat{\epsilon}^{(k)}$ for some $k$, we readily obtain $\epsilon^{(e)} = \hat{\epsilon}^{(k)} = 0$.
If $e$ is an interior element, invoking Lemma~\ref{Lem:zero}, we deduce that $u$ is constant on $e$ such that $\int_e |\nabla u|^2 \,d\Omega = 0$.
Therefore,~\eqref{SSE_local} holds.
\end{proof}

The following corollary is a direct consequence of Theorem~\ref{Thm:SSE}; it says that any preconditioner for the standard FEM is also well-suited for the SSE.
Corollary~\ref{Cor:SSE} is derived in the same manner as Corollary~\ref{Cor:ES}.

\begin{corollary}
\label{Cor:SSE}
Any preconditioner $M^{-1}$ for the standard FEM $K u = f$ works for the SSE $\bar{K}_{\SSE} u = f$ as well.
More precisely, there exists a positive constant $C$ independent of the mesh size $h$ such that
\begin{equation*}
\kappa (M^{-1} \bar{K}_{\SSE}) \leq C \kappa (M^{-1} K).
\end{equation*}
\end{corollary}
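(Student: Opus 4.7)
The plan is to mirror the proof of Corollary~\ref{Cor:ES} essentially verbatim, replacing each appeal to Theorem~\ref{Thm:ES} with the corresponding appeal to Theorem~\ref{Thm:SSE}. Since Theorem~\ref{Thm:SSE} already establishes the two-sided bound
\begin{equation*}
\underline{C}\, u^T K u \leq u^T \bar{K}_{\SSE} u \leq \overline{C}\, u^T K u \quad \forall u \in V_h,
\end{equation*}
with constants $\underline{C}, \overline{C} > 0$ independent of $h$, the hard analytic work has already been done, and what remains is a purely algebraic manipulation of condition numbers.

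The key step is the standard inequality for condition numbers of products of symmetric positive definite matrices (\cite[Corollary~C.2]{TW:2005}, invoked in exactly the same way as in the proof of Corollary~\ref{Cor:ES}), which gives
\begin{equation*}
\kappa(M^{-1} \bar{K}_{\SSE}) \leq \kappa(K^{-1} \bar{K}_{\SSE}) \, \kappa(M^{-1} K).
\end{equation*}
From the two-sided spectral equivalence furnished by Theorem~\ref{Thm:SSE}, the first factor on the right-hand side is bounded by $\overline{C}/\underline{C}$, since the minimum and maximum eigenvalues of $K^{-1}\bar{K}_{\SSE}$ (viewed as a generalized eigenvalue problem with pencil $\bar{K}_{\SSE} - \lambda K$) lie in $[\underline{C}, \overline{C}]$. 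Setting $C = \overline{C}/\underline{C}$ then yields the desired bound.

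There is no real obstacle here, as the argument is formally identical to that of Corollary~\ref{Cor:ES}; all the nontrivial content is encapsulated in Theorem~\ref{Thm:SSE}, whose proof relied on the identification of the SSE smoothing operator with a composition of $(L^2(\Omega))^2$-orthogonal projections together with Lemma~\ref{Lem:zero}. The only point worth noting is that $C$ inherits $h$-independence from the $h$-independence of $\underline{C}$ and $\overline{C}$ in Theorem~\ref{Thm:SSE}, so the resulting condition number estimate for $M^{-1}\bar{K}_{\SSE}$ scales with $h$ in exactly the same way as the estimate for $M^{-1}K$. Thus the proof can be stated in one or two lines by simply citing Theorem~\ref{Thm:SSE} and \cite[Corollary~C.2]{TW:2005}.
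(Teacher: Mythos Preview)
Your proposal is correct and matches the paper's approach exactly: the paper simply states that Corollary~\ref{Cor:SSE} is derived in the same manner as Corollary~\ref{Cor:ES}, i.e., by combining \cite[Corollary~C.2]{TW:2005} with the spectral equivalence of Theorem~\ref{Thm:SSE} and setting $C = \overline{C}/\underline{C}$.
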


We present another useful consequence of Theorems~\ref{Thm:ES} and~\ref{Thm:SSE}: a Poincar\'{e}--Friedrichs-type inequality for the ES-FEM and SSE.
Poincar\'{e}--Friedrichs-type inequalities are especially useful for convergence analysis of FEMs with strain smoothing; see, e.g.,~\cite{LNN:2010,LP:2021}.

\begin{proposition}
\label{Prop:Poincare}
Let $\bar{\nabla}$ denote the smoothed gradient operator for either the ES-FEM or SSE. Then, there exists a positive constant $C$ independent of the mesh size $h$ such that
\begin{equation*}
\intO | \bar{\nabla} u |^2 \,d \Omega \geq C \intO u^2 \,d\Omega, \quad u \in V_h.
\end{equation*}
\end{proposition}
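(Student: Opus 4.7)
The plan is to reduce the claimed inequality to the classical Poincar\'{e}--Friedrichs inequality for $H_0^1(\Omega)$ by invoking the spectral equivalence already established. Concretely, the smoothed quadratic form $\int_\Omega |\bar{\nabla} u|^2 \,d\Omega$ equals $u^T \bar{K}_{\ES} u$ or $u^T \bar{K}_{\SSE} u$, depending on which smoothing operator is used, and the ordinary Dirichlet energy equals $u^T K u$. Thus the lower bounds in Theorem~\ref{Thm:ES} and Theorem~\ref{Thm:SSE} translate directly into
\begin{equation*}
\intO |\bar{\nabla} u|^2 \, d\Omega \geq \underline{C} \intO |\nabla u|^2 \, d\Omega, \quad u \in V_h,
\end{equation*}
with $\underline{C} > 0$ independent of $h$.

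Next, I would apply the standard Poincar\'{e}--Friedrichs inequality: since $V_h \subset H_0^1(\Omega)$ and $\Omega$ is a bounded polygonal domain, there exists a constant $C_P > 0$ depending only on $\Omega$ (and in particular independent of $h$) such that $\intO |\nabla u|^2 \, d\Omega \geq C_P \intO u^2 \, d\Omega$ for every $u \in V_h$. Composing these two estimates gives the claimed inequality with $C = \underline{C}\, C_P$.

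There is no real obstacle here; all the substantive work has already been carried out in the spectral equivalence theorems, which in turn reduce to Lemma~\ref{Lem:zero} together with the scaling argument used to make $\underline{C}$ mesh-independent. The only point worth noting in the write-up is that the same proof covers both the ES-FEM and the SSE uniformly, because Theorems~\ref{Thm:ES} and~\ref{Thm:SSE} provide an $h$-independent lower constant of exactly the same form in either case.
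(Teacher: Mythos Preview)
Your proposal is correct and is essentially identical to the paper's own proof: the paper simply combines the standard Poincar\'{e}--Friedrichs inequality with the spectral equivalence Theorems~\ref{Thm:ES} and~\ref{Thm:SSE}, exactly as you do.
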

\begin{proof}
Combining the standard Poincar\'{e}--Friedrichs inequality~\cite[Lemma~A.14]{TW:2005} with Theorems~\ref{Thm:ES} and~\ref{Thm:SSE} yields the desired result.
\end{proof}

\begin{remark}
\label{Rem:coercivity}
Proposition~\ref{Prop:Poincare} indicates that the bilinear form
\begin{equation*}
\bar{a}(u,v) = \intO \bar{\nabla}u \cdot \bar{\nabla} v \,d\Omega, \quad u,v \in V_h
\end{equation*}
is coercive.
Coercivity of the bilinear form $\bar{a}(\cdot, \cdot)$ corresponding to the S-FEMs was first proven in~\cite{Liu:2010} using a positivity relay argument.
We note that the coercivity constant $C$ in Proposition~\ref{Prop:Poincare} is proven to be independent of $h$, whereas that in~\cite{Liu:2010} was not.
Thus, Proposition~\ref{Prop:Poincare} provides a sharper result than~\cite{Liu:2010}.
\end{remark}

\section{Improvement of preconditioners}
\label{Sec:Improvement}
As we observed in Section~\ref{Sec:Spectral}, existing preconditioners for the standard FEM can be applied to the ES-FEM and SSE, inheriting good convergence properties from the case of the standard FEM.
Meanwhile, when applied to the ES-FEM and SSE, the performance of preconditioners based on subspace correction~\cite{Xu:1992,Xu:2001} can be further improved by modifying local solvers appropriately.
In this section, we present how to construct improved subspace correction preconditioners for  the ES-FEM and SSE.
Specifically, we propose two-level additive Schwarz preconditioners~\cite{TW:2005} for the ES-FEM and SSE; note that Schwarz preconditioning is a standard methodology of parallel computing for large-scale finite element problems; see, e.g.,~\cite{DCPS:2013,Calvo:2019,CPS:2021}.
Although we utilize the two-level additive Schwarz preconditioner as descriptive examples, the method of improvement introduced in this section can be applied to various subspace correction preconditioners such as multigrid and domain decomposition preconditioners.
Throughout this section, we omit the subscript $h$ standing for the mesh size if there is no ambiguity.

\subsection{Two-level additive Schwarz preconditioner}
First, we summarize key features of the standard two-level additive Schwarz preconditioner for the standard FEM~\cite{TW:2005}.
Assuming that the domain $\Omega$ admits a coarse triangulation $\cT_H$ with the characteristic element diameter $H$, it is decomposed into $\cN$ nonoverlapping subdomains $\{\Omega_j \}_{j=1}^\cN$ such that each $\Omega_j$ is the union of several coarse elements in $\cT_H$, and the number of coarse elements consisting of $\Omega_j$ is uniformly bounded.
Each $\Omega_j$ is enlarged to form a larger region $\Omega_j'$ by adding layers of fine elements with the overlap width $\delta$.
If we set
\begin{equation*}
V_j = \left\{ v_j \in H_0^1 (\Omega_j'): v_j|_{e} \in \mathcal{P}_1 (e) \gap \forall e \in \cT_h \text{ inside } \Omega_j' \right\}, \quad 1 \leq j \leq \cN,
\end{equation*}
and
\begin{equation*}
V_0 = \left\{ v_0 \in H_0^1 (\Omega) : v_0|_{e} \in \mathcal{P}_1 (e) \gap \forall e \in \cT_H \right\},
\end{equation*}
then $\{ V_j \}_{j=0}^\cN$ forms a space decomposition of $V = V_h$, i.e.,
\begin{equation*}
V = \sum_{j=0}^\cN R_j^T V_j,
\end{equation*}
where $R_j^T \colon V_j \rightarrow V$, $0 \leq j \leq \cN$, is the natural interpolation operator.
In this setting, the standard two-level additive Schwarz preconditioner is given by
\begin{equation}
\label{prec_original}
M^{-1} = \sum_{j=0}^\cN R_j^T K_j^{-1} R_j,
\end{equation}
where $K_j \colon V_j \rightarrow V_j$, $0 \leq j \leq \cN$, is the local stiffness matrix on the subspace $V_j$, i.e., $K_j = R_j K R_j^T$.
The additive Schwarz condition number~(see~\eqref{kASM}) of the preconditioned operator $M^{-1}K$ satisfies the following upper bound~\cite[Theorem~3.13]{TW:2005}.

\begin{proposition}
\label{Prop:prec_original}
Let $M^{-1}$ be the two-level additive Schwarz preconditioner defined in~\eqref{prec_original}.
Then it satisfies
\begin{equation*}
\kappa_{\ASM} (M^{-1} K) \leq C \left( 1 + \frac{H}{\delta} \right),
\end{equation*}
where $\kappa_{\ASM}$ denotes the additive Schwarz condition number defined in~\eqref{kASM} and $C$ is a positive constant independent of $h$, $H$, and $\delta$.
\end{proposition}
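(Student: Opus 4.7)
The plan is to invoke the abstract additive Schwarz framework of~\cite[Chapter~2]{TW:2005}, which bounds $\kappa_{\ASM}(M^{-1}K)$ in terms of three constants associated with the decomposition $V = \sum_{j=0}^{\cN} R_j^T V_j$: a stable-decomposition constant $C_0^2$, a strengthened Cauchy--Schwarz constant $\rho(\mathcal{E})$, and a local stability constant $\omega$. Because the local solvers are the exact restrictions $K_j = R_j K R_j^T$, we immediately have $\omega = 1$. Because the overlapping cover $\{\Omega_j'\}_{j=1}^{\cN}$ admits a coloring with a uniformly bounded number $N_c$ of colors depending only on the shape regularity of $\cT_H$, the constant $\rho(\mathcal{E}) \leq N_c + 1$ is independent of $h$, $H$, and $\delta$. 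Hence the entire estimate reduces to establishing a stable decomposition with $C_0^2 \leq C(1 + H/\delta)$.

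For the stable decomposition, given $v \in V$ I would first project onto the coarse space by taking $v_0 = I_H v \in V_0$, where $I_H$ is a quasi-interpolant onto $V_0$ (a Cl\'ement or Scott--Zhang operator, say) satisfying the classical $H^1$-stability estimate and the local $L^2$-approximation estimate $\| v - I_H v \|_{L^2(D)} \leq C H |v|_{H^1(\tilde D)}$ for each coarse-aligned subregion $D$ with a mild enlargement $\tilde D$, with constants depending only on the shape regularity of $\cT_H$. Next, I would choose a partition of unity $\{\theta_j\}_{j=1}^{\cN}$ subordinate to $\{\Omega_j'\}$ with $\|\nabla \theta_j\|_{L^\infty} \leq C/\delta$ using the standard distance-function construction, and set $v_j = I_h \bigl( \theta_j (v - v_0) \bigr) \in V_j$, where $I_h$ denotes nodal interpolation onto $V_h$. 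A routine computation using the product rule, an inverse inequality to absorb $I_h$, and the gradient bound on $\theta_j$ yields
\begin{equation*}
|v_j|_{H^1(\Omega_j')}^2 \leq C \left( \frac{1}{\delta^2} \| v - v_0 \|_{L^2(\Omega_j')}^2 + | v - v_0 |_{H^1(\Omega_j')}^2 \right).
\end{equation*}
Summing over $j$ and invoking finite overlap of $\{\Omega_j'\}$, the coarse approximation estimate $\| v - v_0 \|_{L^2(\Omega)} \leq C H |v|_{H^1(\Omega)}$, and the $H^1$-stability $|v_0|_{H^1(\Omega)}^2 \leq C |v|_{H^1(\Omega)}^2$, produces
\begin{equation*}
\sum_{j=0}^{\cN} a(v_j, v_j) \leq C \left( 1 + \frac{H}{\delta} \right) a(v,v),
\end{equation*}
which is the desired stable-decomposition bound with $a(u,v) = u^T K v$.

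The main obstacle is precisely this last step: the $1/\delta^2$ factor coming from $\nabla \theta_j$ must be balanced against the $H^2$ factor from the coarse approximation error to produce the sharp ratio $H/\delta$, rather than an inferior $(H/\delta)^2$ bound. This balance hinges on applying the coarse $L^2$-approximation estimate only within the narrow overlap strip where $\nabla \theta_j$ is nonzero, using that the width of this strip is $\delta$ and the local shape regularity propagates the $H^1$ seminorm onto a mildly enlarged region. As the argument is a direct reproduction of the classical two-level overlapping Schwarz analysis, the conclusion may equivalently be invoked verbatim from~\cite[Theorem~3.13]{TW:2005}.
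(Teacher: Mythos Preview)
Your approach is correct and coincides with the paper's: the paper does not supply a proof of this proposition but simply cites~\cite[Theorem~3.13]{TW:2005}, exactly as you do in your closing sentence. Your additional sketch of the underlying argument (exact local solvers giving $\omega=1$, a coloring bound for the strengthened Cauchy--Schwarz constant, coarse quasi-interpolant plus partition of unity for the stable decomposition, and the overlap-strip refinement needed to upgrade $(H/\delta)^2$ to $H/\delta$) is the standard route to that theorem; note only that the paper's $\kappa_{\ASM}$ in~\eqref{kASM} is phrased via the constant $\tau_0$ of Assumption~\ref{Ass:CS} rather than $\rho(\mathcal{E})$, but the coloring argument bounds $\tau_0$ in the same way.
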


Proposition~\ref{Prop:prec_original} can be proven by using the abstract convergence theory of additive Schwarz methods presented in~\cite{TW:2005,Park:2020}; see \ref{App:ASM} for a brief summary.
Combining Corollaries~\ref{Cor:ES} and~\ref{Cor:SSE} with Proposition~\ref{Prop:prec_original}, we deduce that the preconditioner $M^{-1}$ works for the ES-FEM and SSE as well as for the standard FEM.

\begin{corollary}
\label{Cor:prec_original}
Let $M^{-1}$ be the two-level additive Schwarz preconditioner defined in~\eqref{prec_original} and let $\bar{K}$ be the stiffness matrix of either the ES-FEM or SSE.
Then it satisfies that
\begin{equation*}
\kappa_{\ASM} (M^{-1} \bar{K}) \leq C \left( 1 + \frac{H}{\delta} \right),
\end{equation*}
where $\kappa_{\ASM}$ denotes the additive Schwarz condition number defined in~\eqref{kASM} and $C$ is a positive constant independent of $h$, $H$, and $\delta$.
\end{corollary}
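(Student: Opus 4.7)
The plan is to combine Proposition~\ref{Prop:prec_original}, which bounds the additive Schwarz condition number for the standard FEM stiffness matrix $K$, with the spectral equivalence results of Section~\ref{Sec:Spectral}. Since Theorems~\ref{Thm:ES} and~\ref{Thm:SSE} provide constants $\underline{C}$ and $\overline{C}$ independent of $h$ such that $\underline{C} u^T K u \leq u^T \bar{K} u \leq \overline{C} u^T K u$ for every $u \in V_h$, the Rayleigh quotients that govern the extreme eigenvalues of $M^{-1} \bar{K}$ are sandwiched between those of $M^{-1} K$ up to the factors $\underline{C}$ and $\overline{C}$.

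First, I would apply Proposition~\ref{Prop:prec_original} directly to obtain $\kappa_{\ASM}(M^{-1} K) \leq C(1 + H/\delta)$ for a constant $C$ independent of $h$, $H$, and $\delta$. Next, I would appeal to the characterization of $\kappa_{\ASM}$ recalled in~\ref{App:ASM}: since the local solvers $K_j = R_j K R_j^T$ and the coarse solver $K_0$ are defined by the standard FEM bilinear form, the ratio $\kappa_{\ASM}(M^{-1} \bar{K}) / \kappa_{\ASM}(M^{-1} K)$ is controlled by the spectral-equivalence constants $\overline{C}/\underline{C}$. Concretely, if one uses the Rayleigh-quotient definition $\kappa_{\ASM}(M^{-1} \bar{K}) = \sup_u (u^T \bar{K} u)/(u^T M u) \cdot \sup_u (u^T M u)/(u^T \bar{K} u)$, the bilateral inequality between $u^T K u$ and $u^T \bar{K} u$ immediately yields
\begin{equation*}
\kappa_{\ASM}(M^{-1} \bar{K}) \leq \frac{\overline{C}}{\underline{C}} \, \kappa_{\ASM}(M^{-1} K).
\end{equation*}
Combining these two inequalities gives $\kappa_{\ASM}(M^{-1} \bar{K}) \leq C'(1 + H/\delta)$ with $C' = C \overline{C}/\underline{C}$, which is the claim.

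This proof is essentially a one-line transfer argument analogous to Corollaries~\ref{Cor:ES} and~\ref{Cor:SSE}, so the main nontrivial point (rather than an obstacle) is simply verifying that the definition of $\kappa_{\ASM}$ behaves monotonically under spectral equivalence of the target operator. Because $M^{-1}$ is fixed and symmetric positive definite, and only the operator in the numerator of the generalized Rayleigh quotient changes from $K$ to $\bar{K}$, this monotonicity is immediate from Theorems~\ref{Thm:ES} and~\ref{Thm:SSE}; no reworking of the stable-decomposition or coloring estimates underlying Proposition~\ref{Prop:prec_original} is required.
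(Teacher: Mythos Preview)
Your overall strategy---transfer Proposition~\ref{Prop:prec_original} from $K$ to $\bar{K}$ via the spectral equivalence of Theorems~\ref{Thm:ES} and~\ref{Thm:SSE}---matches the paper's one-line argument. However, there is a genuine slip in the ``concretely'' step. You write
\[
\kappa_{\ASM}(M^{-1}\bar{K}) \;=\; \sup_u \frac{u^T\bar{K}u}{u^T M u}\cdot \sup_u \frac{u^T M u}{u^T\bar{K}u},
\]
but the right-hand side is the ordinary condition number $\kappa(M^{-1}\bar{K})$, not $\kappa_{\ASM}$. In the paper, $\kappa_{\ASM}$ is \emph{defined} in~\eqref{kASM} as $\omega_0 C_0^2/\tau_0$, with $C_0,\tau_0,\omega_0$ the optimal constants in Assumptions~\ref{Ass:stable}--\ref{Ass:local}; Theorem~\ref{Thm:ASM} only gives the one-sided inequality $\kappa(M^{-1}\bar{K})\leq\kappa_{\ASM}(M^{-1}\bar{K})$. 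Bounding the Rayleigh-quotient product therefore does not, by itself, bound $\kappa_{\ASM}$.

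The fix is easy and keeps your strategy intact: apply the spectral equivalence $\underline{C}\,u^T K u \leq u^T\bar{K}u \leq \overline{C}\,u^T K u$ to each of the three assumptions with target operator $A=\bar{K}$ and local operators $\tilde{A}_j=K_j$. One gets
\[
C_0^2(M^{-1}\bar{K}) \leq \underline{C}^{-1}\,C_0^2(M^{-1}K),\qquad
\tau_0(M^{-1}\bar{K}) \geq \frac{\underline{C}}{\overline{C}}\,\tau_0(M^{-1}K),\qquad
\omega_0(M^{-1}\bar{K}) \leq \overline{C}\,\omega_0(M^{-1}K),
\]
so $\kappa_{\ASM}(M^{-1}\bar{K}) \leq (\overline{C}/\underline{C})^2\,\kappa_{\ASM}(M^{-1}K)$, and Proposition~\ref{Prop:prec_original} finishes the argument. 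This is the content behind the paper's ``combining Corollaries~\ref{Cor:ES} and~\ref{Cor:SSE} with Proposition~\ref{Prop:prec_original}''; your proposal is the same approach, but the Rayleigh-quotient shortcut must be replaced by the constant-by-constant transfer.
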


Corollary~\ref{Cor:prec_original} means that preconditioning the ES-FEM and SSE by $M^{-1}$ is as advantageous as preconditioning the standard FEM by $M^{-1}$.
For instance, the $M^{-1}$-preconditioned SSE is scalable in the sense that its condition number does not deteriorate even if the fine mesh size $h$ decreases when the coarse mesh size $H$ and overlap width $\delta$ decrease keeping $H/\delta$ and $\delta/h$ constant.
Therefore, the $M^{-1}$-preconditioned SSE is suitable for large-scale parallel computing in a manner that each subspace $V_j$ is assigned to a processor; this aspect is a usual advantage of subspace correction methods as parallel numerical solvers~\cite{TW:2005}.

\subsection{Enhanced local problems}
Until now, we have observed that subspace correction preconditioners designed for the standard FEM perform their roles properly even if they are applied to either the ES-FEM or SSE.
Meanwhile, for each of the methods, preconditioners can be modified to be more suitable to the method to achieve better performance.
The idea is straightforward; we simply replace the local stiffness matrices in~\eqref{prec_original} defined in terms of the standard FEM with those defined in terms of either the ES-FEM or SSE.
Let $\bar{K} \colon V \rightarrow V$ be the stiffness matrix of either the ES-FEM or SSE.
We set
\begin{equation}
\label{prec_enhanced}
\bar{M}^{-1} = \sum_{j=0}^\cN R_j^T \bar{K}_j^{-1} R_j, 
\end{equation}
where $\bar{K}_j \colon V_j \rightarrow V_j$, $0 \leq j \leq \cN$, is defined by $\bar{K}_j = R_j \bar{K} R_j^T$.
That is, $\bar{K}_j$ is the local stiffness matrix of either the ES-FEM or SSE on the subspace $V_j$.
Invoking Theorem~\ref{Thm:ASM}, we can mathematically explain why the preconditioner $\bar{M}^{-1}$ performs better than $M^{-1}$ when it is applied to either the ES-FEM or SSE.
Theorem~\ref{Thm:enhanced} says that $\bar{M}^{-1}$ is a better preconditioner for $\bar{K}$ than $M^{-1}$, and that it inherits good properties of $M^{-1}$ such as the scalability.

\begin{theorem}
\label{Thm:enhanced}
The enhanced additive Schwarz preconditioner $\bar{M}^{-1}$ defined in~\eqref{prec_enhanced} performs better than the original additive Schwarz preconditioner $M^{-1}$ defined in~\eqref{prec_original}.
More precisely, it satisfies
\begin{equation*}
\kappa_{\ASM} (\bar{M}^{-1} \bar{K} ) \leq \kappa_{\ASM} (M^{-1} \bar{K}),
\end{equation*}
where $\bar{K}$ is the stiffness matrix of either the ES-FEM or SSE, and $\kappa_{\ASM}$ denotes the additive Schwarz condition number defined in~\eqref{kASM}.
\end{theorem}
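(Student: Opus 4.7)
The plan is to apply the abstract additive Schwarz framework summarized in Appendix~\ref{App:ASM}: Theorem~\ref{Thm:ASM} expresses $\kappa_{\ASM}(M^{-1}A)$ as a product of a \emph{stable decomposition constant} that depends on the local solvers used to assemble $M^{-1}$ and a \emph{strengthened Cauchy--Schwarz} (coloring) constant that depends only on the decomposition $\{V_j\}_{j=0}^{\cN}$ and the global operator $A$. Both $M^{-1}$ and $\bar M^{-1}$ are built on the same space decomposition and are compared as preconditioners for the same global operator $\bar K$, so the coloring factor is identical in the two cases, and the comparison reduces to bounding the stable decomposition constants against one another.

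First, I would invoke the standard variational identity characterizing an additive Schwarz preconditioner via its local solvers,
$$u^T \bar M u = \inf_{u = \sum_{j=0}^{\cN} R_j^T u_j} \sum_{j=0}^{\cN} u_j^T \bar K_j u_j, \qquad u^T M u = \inf_{u = \sum_{j=0}^{\cN} R_j^T u_j} \sum_{j=0}^{\cN} u_j^T K_j u_j.$$
The stable decomposition constants featured in Theorem~\ref{Thm:ASM} for the two preconditioners acting on $\bar K$ are then
$$\bar C_0^2 = \sup_{u \in V} \frac{u^T \bar M u}{u^T \bar K u}, \qquad C_0^2 = \sup_{u \in V} \frac{u^T M u}{u^T \bar K u}.$$

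Second, Theorems~\ref{Thm:ES} and~\ref{Thm:SSE} give $\overline C = 1$, hence $v^T \bar K v \leq v^T K v$ for every $v \in V$. Specializing this to $v = R_j^T u_j$ yields $u_j^T \bar K_j u_j \leq u_j^T K_j u_j$ on each $V_j$. Summing over $j$ and taking the infimum over admissible decompositions shows that $u^T \bar M u \leq u^T M u$ for every $u \in V$, so that $\bar C_0^2 \leq C_0^2$. Combined with the invariance of the coloring factor, Theorem~\ref{Thm:ASM} then delivers $\kappa_{\ASM}(\bar M^{-1} \bar K) \leq \kappa_{\ASM}(M^{-1} \bar K)$.

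The main obstacle, beyond the mostly mechanical manipulations above, is confirming that the abstract bound in Theorem~\ref{Thm:ASM} separates the local-solver-dependent stable decomposition constant from the coloring factor cleanly enough that monotonicity in $C_0^2$ transfers directly to monotonicity in $\kappa_{\ASM}$. If the appendix instead packages both pieces of information into a single quantity, a brief rearrangement will be required to isolate the role of the local solvers before the comparison above can be invoked.
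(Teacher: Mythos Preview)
Your plan is close in spirit to the paper's argument, and the key inequality $u_j^T \bar K_j u_j \le u_j^T K_j u_j$ (which you obtain from $\overline C = 1$ in Theorems~\ref{Thm:ES} and~\ref{Thm:SSE}) is exactly the right ingredient. But there is a genuine gap: the additive Schwarz condition number in~\eqref{kASM} is a product of \emph{three} constants, $\kappa_{\ASM} = \omega_0 C_0^2/\tau_0$, and you have only addressed $C_0^2$ and $\tau_0$. You are right that $\tau_0$ (Assumption~\ref{Ass:CS}) is independent of the local solvers, but $\omega_0$ (Assumption~\ref{Ass:local}) is not. In fact, since $\bar K_j = R_j \bar K R_j^T$ are the exact Galerkin local operators, one has $\omega_0(\bar M^{-1}\bar K)=1$, whereas $\omega_0(M^{-1}\bar K)=\max_j \lambda_{\max}(K_j^{-1}\bar K_j)\le 1$. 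So $\omega_0$ moves in the \emph{unfavorable} direction, and your bound $\bar C_0^2 \le C_0^2$ only yields
\[
\kappa_{\ASM}(\bar M^{-1}\bar K)=\frac{\bar C_0^2}{\tau_0}\le\frac{C_0^2}{\tau_0},
\]
which does not dominate $\kappa_{\ASM}(M^{-1}\bar K)=\omega_0(M^{-1}\bar K)\,C_0^2/\tau_0$ when $\omega_0(M^{-1}\bar K)<1$.

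The fix is simple and is what the paper does: instead of the crude bound $u_j^T \bar K_j u_j \le u_j^T K_j u_j$, keep the sharp constant $u_j^T \bar K_j u_j \le \lambda_{\max}(K_j^{-1}\bar K_j)\,u_j^T K_j u_j$. Summing over $j$ for the optimal decomposition of $v$ (with respect to the $K_j$) gives
\[
\sum_j u_j^T \bar K_j u_j \le \omega_0(M^{-1}\bar K)\sum_j u_j^T K_j u_j = \omega_0(M^{-1}\bar K)\,C_0^2(M^{-1}\bar K)\,v^T\bar K v,
\]
i.e.\ $C_0^2(\bar M^{-1}\bar K)\le \omega_0(M^{-1}\bar K)\,C_0^2(M^{-1}\bar K)$. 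Combined with $\omega_0(\bar M^{-1}\bar K)=1$ and the equality of the $\tau_0$'s, this gives exactly $\kappa_{\ASM}(\bar M^{-1}\bar K)\le\kappa_{\ASM}(M^{-1}\bar K)$. So your argument needs only to track $\omega_0$ explicitly; once you do, it coincides with the paper's proof.
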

\begin{proof}
By the definition of $\kappa_{\ASM}$, it suffices to show that
\begin{equation*}
\omega_0 (\bar{M}^{-1} \bar{K} ) C_0^2 (\bar{M}^{-1} \bar{K} ) \leq \omega_0 (M^{-1} \bar{K}) C_0^2 (M^{-1} \bar{K}),\quad
\tau_0 (\bar{M}^{-1} \bar{K} ) \geq \tau_0 (M^{-1} \bar{K}),
\end{equation*}
where $C_0$, $\tau_0$, and $\omega_0$ are defined in Assumptions~\ref{Ass:stable},~\ref{Ass:CS}, and~\ref{Ass:local}, respectively.
First, we readily get $\tau_0 (\bar{M}^{-1} \bar{K} ) = \tau_0 (M^{-1} \bar{K})$ because Assumption~\ref{Ass:CS} does not rely on which local operators are used.
Since $\bar{K}_j = R_j \bar{K} R_j^T$, $0 \leq j \leq \cN$, it follows by the definition of $\omega_0 $ that $\omega_0 (\bar{M}^{-1} \bar{K} ) = 1$.
Meanwhile, as the inequality
\begin{equation*}
v_j^T \bar{K}_j v_j \leq \lambda_{\max}(K_j^{-1} \bar{K}_j) v_j^T K_j v_j \quad \forall v_j \in V_j
\end{equation*}
is sharp for all $0 \leq j \leq \cN$, we get
\begin{equation*}
\omega_0 (M^{-1} \bar{K}) = \max_{0 \leq j \leq \cN} \lambda_{\max}(K_j^{-1} \bar{K}_j).
\end{equation*}

Next, we take any $v \in V$ and let $v = \sum_{j=0}^\cN R_j^T v_j$ be a decomposition of $v$ such that
\begin{equation*}
\sum_{j=0}^\cN v_j^T K_j v_j = C_0^2 (M^{-1} \bar{K}) v^T \bar{K} v.
\end{equation*}
We refer to~\cite[Lemma~2.5]{TW:2005} for the existence of this decomposition.
It follows that
\begin{equation*}
\sum_{j=0}^\cN v_j^T \bar{K}_j v_j
 \leq  \sum_{j=0}^\cN \lambda_{\max} (K_j^{-1} \bar{K}_j ) v_j^T K_j v_j
 \leq \omega_0 (M^{-1} \bar{K}) \sum_{j=0}^\cN v_j^T K_j v_j
 = \omega_0 (M^{-1} \bar{K}) C_0^2 (M^{-1} \bar{K}) v^T \bar{K}^T v,
\end{equation*}
which implies $C_0^2 (\bar{M}^{-1} \bar{K}) \leq \omega_0 (M^{-1} \bar{K}) C_0^2 (M^{-1} \bar{K})$ by the definition of $C_0$.
Consequently, we have
\begin{equation*}
\omega_0 (\bar{M}^{-1} \bar{K}) C_0^2 (\bar{M}^{-1} \bar{K})
= C_0^2 (\bar{M}^{-1} \bar{K}) \leq \omega_0 (M^{-1} \bar{K}) C_0^2 (M^{-1} \bar{K}),
\end{equation*}
which completes the proof.
\end{proof}

We conclude this section by introducing a variant of~\eqref{prec_enhanced} that is more convenient to implement.
When we implement~\eqref{prec_enhanced}, a cumbersome step is to assemble the coarse stiffness matrix $\bar{K}_0 = R_0 \bar{K} R_0^T$.
While the interpolation operator $R_0^T$ is defined for functions in the coarse space $V_0$, the strain smoothing procedure in $\bar{K}$ is defined in the fine-scale space $V$.
Hence, to assemble $\bar{K}_0$, fine-scale computations are required, although it acts on the coarse space $V_0$.
To explain in detail, we consider how to compute each entry of the coarse stiffness matrix $\bar{K}_0$.
The entry $(\bar{K}_0)_{ij}$ on the $i$th row and $j$th column is given by
\begin{equation*}
(\bar{K}_0)_{ij} = \intO \bar{\nabla} (R_0^T \phi_i) \cdot \bar{\nabla} (R_0^T \phi_j) \,d\Omega,
\end{equation*}
where $\bar{\nabla}$ is either $\bar{\nabla}_{\ES}$ or $\bar{\nabla}_{\SSE}$, and $\phi_i$ and $\phi_j$ denote the $i$th and $j$th nodal basis functions for $V_0$, respectively.
For simplicity, we suppose that $\bar{\nabla} = \bar{\nabla}_{\ES}$ and that $\cT_h$ is a refinement of $\cT_H$.
Since $R_0^T \phi_i$ is continuous and piecewise linear on $\cT_H$, $\nabla (R_0^T \phi_i)$ is contained in a coarse-scale space $W_H$, where
\begin{equation*}
W_H = \left\{ \epsilon \in (L^2 (\Omega))^2 : \epsilon|_{e} \in (\cP_0 (e))^2 \gap \forall e \in \cT_H \right\}.
\end{equation*}
In contrast, by the definition of $\bar{\nabla}_{\ES}$, $\bar{\nabla}_{\ES} (R_0^T \phi_i)$ is contained in a fine-scale space $W_{h, \ES}$.
Hence, we need integration on the fine mesh $\cT_h$ in order to compute $(\bar{K}_0)_{ij}$, while it suffices to perform integration on the coarse mesh $\cT_H$ to compute $(K_0)_{ij}$.

To avoid such fine-scale computations, we propose the following alternative two-level additive Schwarz preconditioner:
\begin{equation}
\label{prec_enhanced_alt}
\bar{M}_{\alt}^{-1} = R_0^T K_0^{-1} R_0 + \sum_{j=1}^\cN R_j^T \bar{K}_j^{-1} R_j .
\end{equation}
Note that if $\cT_h$ is a refinement of $\cT_H$, then $K_0$ agrees with the stiffness matrix of the standard FEM associated with the coarse mesh $\cT_H$.
The alternative preconditioner $\bar{M}_{\alt}^{-1}$ involves the stiffness matrices of the strain smoothing methods in the fine-scale subspaces, whereas its coarse-scale operation is defined in terms of the standard FEM.
Therefore, it is easier to implement than $\bar{M}^{-1}$.
Because $\bar{M}_{\alt}^{-1}$ is a type of hybrid of $M^{-1}$ and $\bar{M}^{-1}$, it is expected that the convergence behavior of the $\bar{M}_{\alt}^{-1}$-preconditioned operator lies between those of the $M^{-1}$- and $\bar{M}^{-1}$-preconditioned operators.
Numerical comparisons among the preconditioners $M^{-1}$, $\bar{M}^{-1}$, and $\bar{M}_{\alt}^{-1}$ will be presented in Section~\ref{Sec:Numerical}.

\section{Numerical results}
\label{Sec:Numerical}
In this section, we verify the theoretical results presented through numerical experiments.   
We solve two-dimensional Poisson and linear elasticity problems using three-node triangular elements with the ES-FEM and SSE. 
The preconditioned conjugate gradient method is used to solve a linear system $Au = f$, $A \in \{ K, \bar{K}_{\ES}, \bar{K}_{\SSE} \}$, with a stop criterion
\begin{equation*}
\frac{\| A u ^{(k)} - f \|_{\ell^2}}{\| f \|_{\ell^2}} < 10^{-12},
\end{equation*}
and with zero initial guess, where $\| \cdot \|_{\ell^2}$ denotes the $\ell^2$-norm of degrees of freedom and $u^{(k)}$ is the $k$th iterate of the preconditioned conjugate gradient method.
We verify the spectral equivalence among the stiffness matrices of the standard FEM, ES-FEM, and SSE.
Additionally, we compare the performance of the two-level additive Schwarz preconditioners $M^{-1}$, $\bar{M}^{-1}$, and $\bar{M}_{\alt}^{-1}$ introduced in~\eqref{prec_original},~\eqref{prec_enhanced}, and~\eqref{prec_enhanced_alt}, respectively.

\subsection{Poisson equation}

\begin{figure}[]
\centering
\includegraphics[width=1\textwidth]{./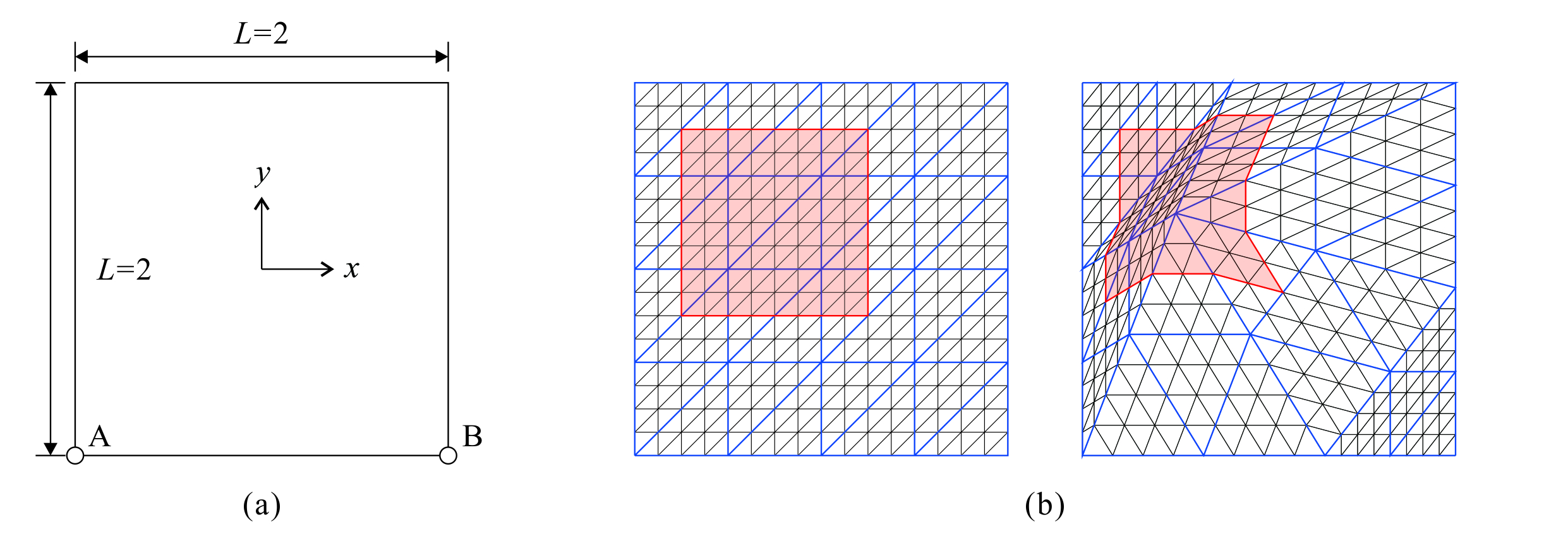}
\caption{\textbf{(a)} Square domain $\Omega = (-1,1)^2 \subset \mathbb{R}^2$ for the model Poisson and linear elasticity problems. \textbf{(b)} Domain decomposition settings for structured and unstructured meshes when $n=2^4, N=2^2$, and $\delta=2h$.
The blue and black lines represent the coarse and fine triangulations $\cT_H$ and $\cT_h$, respectively.}
\label{Fig:block}
\end{figure}

The first example is the Poisson problem~\eqref{model} defined on the domain $\Omega = (-1, 1)^2 \subset \mathbb{R}^2$ shown in Fig.~\ref{Fig:block}(a), where the function $f$ is given such that the problem has the exact solution $u(x,y)=e^{8(x+y)}\sin(\pi x)\sin(\pi y)$.
The side length of the square domain $\Omega$ is denoted by $L=2$.
We employ two types of coarse triangulations $\cT_H$: the standard checkerboard type structured triangulation and an unstructured triangulation with nonuniform nodal points, with $2 \times N \times N$ coarse elements~($N=2$, $2^2$, \dots, $2^5$).
Fine triangulations $\cT_h$ are constructed as the uniform refinements of $\cT_H$ such that there are $2 \times n \times n$ fine elements in the whole domain $\Omega$~($n=2^3$, $2^4$, \dots, $2^7$).
Each nonoverlapping subdomain $\Omega_j$, $1 \leq j \leq \cN = N \times N$, is a quadrilateral region composed of two coarse elements, and the corresponding overlapping subdomain $\Omega_j'$ is formed by adding $\delta/h$ layers of fine elements; see Fig.~\ref{Fig:block}(b) for the case of $n=2^4$ and $N=2^2$.
The characteristic sizes of the fine and coarse meshes are calculated by $h=L/n$ and $H=L/N$, respectively.
We set the overlap width $\delta$ as $h$ or $2h$, such that $\delta/h$ is constant.

\begin{table}[] \centering
\begin{tabular}{ccccccccccccc}
\hline
{} && {} && $n=2^3$ && $n=2^4$ && $n=2^5$ && $n=2^6$ && $n=2^7$ \\
\hline
\multirow{2}{*}{ES-FEM} && Structured mesh		&& 1.90e0 && 2.04e0 && 2.07e0 && 2.08e0 && 2.09e0 \\
                                     && Unstructured mesh	&& 2.21e0 && 2.60e0 && 2.88e0 && 3.05e0 && 3.12e0 \\
\hline
\multirow{2}{*}{SSE} && Structured mesh		&& 2.87e0 && 3.24e0 && 3.34e0 && 3.37e0 && 3.38e0 \\
                                && Unstructured mesh	&& 3.45e0 && 4.32e0 && 4.90e0 && 5.26e0 && 5.40e0 \\
\hline 
\end{tabular}
\caption{Condition numbers $\kappa ( K^{-1} \bar{K}_{\ES})$ and $\kappa ( K^{-1} \bar{K}_{\SSE})$ for the model Poisson problem~\eqref{model}.}
\label{Table:Spectrum_Poisson}
\end{table}

\begin{table}[] \centering
\resizebox{\textwidth}{!}{
\begin{tabular}{ccccccccccccccccc}
\hline
\multirow{2}{*}{Precond.} & \multirow{2}{*}{$N$} && \multicolumn{2}{c}{$n=2^3$} && \multicolumn{2}{c}{$n=2^4$} && \multicolumn{2}{c}{$n=2^5$} && \multicolumn{2}{c}{$n=2^6$} && \multicolumn{2}{c}{$n=2^7$} \\
\cline{4-5} \cline{7-8} \cline{10-11} \cline{13-14} \cline{16-17}
&&& \#iter & $\kappa$ && \#iter & $\kappa$ && \#iter & $\kappa$ && \#iter & $\kappa$ && \#iter & $\kappa$ \\ 
\hline
None &&& 17 & 1.73e1 && 34 & 6.92e1 && 69 & 2.77e2 && 138 & 1.11e3 && 278 & 4.43e3 \\
\hline
\multirow{5}{*}{\begin{tabular}{c}$M^{-1}$\end{tabular}}
& $2$ && 13 & 5.73e0 && 14 & 6.67e0 && 13 & 6.41e0 && 14 & 7.99e0 && 15 & 1.52e1 \\
& $2^2$ && - & - && 16 & 5.98e0 && 17 & 6.63e0 && 17 & 6.91e0 && 21 & 1.07e1 \\
& $2^3$ && - & - && - & - && 16 & 5.86e0 && 18 & 6.72e0 && 19 & 7.00e0 \\
& $2^4$ && - & - && - & - && - & - && 18 & 6.00e0 && 19 & 6.76e0 \\
& $2^5$ && - & - && - & - && - & - && - & - && 18 & 6.04e0 \\
\hline 
\multirow{5}{*}{\begin{tabular}{c}$\bar{M}^{-1}$\end{tabular}}
& $2$ && 9 & 4.81e0 && 9 & 5.58e0 && 10 & 6.84e0 && 11 & 1.00e1 && 12 & 1.76e1 \\
& $2^2$ && - & - && 14 & 5.45e0 && 14 & 5.88e0 && 15 & 6.98e0 && 20 & 1.20e1 \\
& $2^3$ && - & - && - & - && 15 & 5.49e0 && 15 & 5.91e0 && 17 & 7.00e0 \\
& $2^4$ && - & - && - & - && - & - && 16 & 5.50e0 && 16 & 5.91e0 \\
& $2^5$ && - & - && - & - && - & - && - & - && 16 & 5.45e0 \\
\hline 
\multirow{5}{*}{\begin{tabular}{c}$\bar{M}_{\alt}^{-1}$\end{tabular}}
& $2$ && 9 & 4.75e0 && 9 & 5.62e0 && 9 & 6.88e0 && 11 & 1.00e1 && 12 & 1.76e1 \\
& $2^2$ && - & - && 14 & 5.33e0 && 14 & 5.90e0 && 15 & 7.00e0 && 20 & 1.20e1 \\
& $2^3$ && - & - && - & - && 15 & 5.37e0 && 15 & 5.95e0 && 16 & 7.00e0 \\
& $2^4$ && - & - && - & - && - & - && 16 & 5.35e0 && 16 & 5.94e0 \\
& $2^5$ && - & - && - & - && - & - && - & - && 16 & 5.24e0 \\
\hline 
\end{tabular}
}
\caption{Condition numbers $\kappa$ and iteration counts \#iter of the ES-FEM applied to the model Poisson problem~\eqref{model} for the structured meshes and $\delta=2h$ with the two-level additive Schwarz preconditioners $M^{-1}$, $\bar{M}^{-1}$, and $\bar{M}_{\alt}^{-1}$ defined in~\eqref{prec_original},~\eqref{prec_enhanced}, and~\eqref{prec_enhanced_alt}, respectively.}
\label{Table:Poisson_ES-FEM}
\end{table}

\begin{table}[] \centering
\resizebox{\textwidth}{!}{
\begin{tabular}{ccccccccccccccccc}
\hline
\multirow{2}{*}{Precond.} & \multirow{2}{*}{$N$} && \multicolumn{2}{c}{$n=2^3$} && \multicolumn{2}{c}{$n=2^4$} && \multicolumn{2}{c}{$n=2^5$} && \multicolumn{2}{c}{$n=2^6$} && \multicolumn{2}{c}{$n=2^7$} \\
\cline{4-5} \cline{7-8} \cline{10-11} \cline{13-14} \cline{16-17}
&&& \#iter & $\kappa$ && \#iter & $\kappa$ && \#iter & $\kappa$ && \#iter & $\kappa$ && \#iter & $\kappa$ \\ 
\hline
None &&& 15 & 1.32e1 && 30 & 5.22e1 && 59 & 2.08e2 && 119 & 8.30e2 && 240 & 3.32e3 \\
\hline
\multirow{5}{*}{\begin{tabular}{c}$M^{-1}$\end{tabular}}
& $2$ && 14 & 7.16e0 && 16 & 9.49e0 && 16 & 9.55e0 && 15 & 9.30e0 && 16 & 1.40e1 \\
& $2^2$ && - & - && 18 & 7.62e0 && 20 & 9.53e0 && 20 & 1.01e1 && 21 & 1.01e1 \\
& $2^3$ && - & - && - & - && 19 & 7.72e0 && 20 & 9.47e0 && 22 & 1.03e1 \\
& $2^4$ && - & - && - & - && - & - && 20 & 7.55e0 && 22 & 9.59e0 \\
& $2^5$ && - & - && - & - && - & - && - & - && 21 & 7.74e0 \\
\hline 
\multirow{5}{*}{\begin{tabular}{c}$\bar{M}^{-1}$\end{tabular}}
& $2$ && 9 & 4.77e0 && 9 & 5.48e0 && 9 & 6.73e0 && 10 & 9.49e0 && 12 & 1.67e1 \\
& $2^2$ && - & - && 14 & 5.42e0 && 14 & 5.77e0 && 15 & 6.87e0 && 20 & 1.18e1 \\
& $2^3$ && - & - && - & - && 15 & 5.46e0 && 15 & 5.79e0 && 16 & 6.87e0 \\
& $2^4$ && - & - && - & - && - & - && 16 & 5.46e0 && 16 & 5.79e0 \\
& $2^5$ && - & - && - & - && - & - && - & - && 16 & 5.43e0 \\
\hline 
\multirow{5}{*}{\begin{tabular}{c}$\bar{M}_{\alt}^{-1}$\end{tabular}}
& $2$ && 9 & 4.69e0 && 9 & 5.53e0 && 9 & 6.79e0 && 10 & 9.49e0 && 12 & 1.67e1 \\
& $2^2$ && - & - && 14 & 5.27e0 && 14 & 5.80e0 && 15 & 6.90e0 && 20 & 1.18e1 \\
& $2^3$ && - & - && - & - && 15 & 5.30e0 && 15 & 5.85e0 && 16 & 6.88e0 \\
& $2^4$ && - & - && - & - && - & - && 16 & 5.29e0 && 16 & 5.84e0 \\
& $2^5$ && - & - && - & - && - & - && - & - && 16 & 5.23e0 \\
\hline    
\end{tabular}
}
\caption{Condition numbers $\kappa$ and iteration counts \#iter of the SSE applied to the model Poisson problem~\eqref{model} for the structured meshes and $\delta=2h$ with the two-level additive Schwarz preconditioners $M^{-1}$, $\bar{M}^{-1}$, and $\bar{M}_{\alt}^{-1}$ defined in~\eqref{prec_original},~\eqref{prec_enhanced}, and~\eqref{prec_enhanced_alt}, respectively.}
\label{Table:Poisson_SSE}
\end{table}

\begin{figure}[]
\centering
\includegraphics[width=1\textwidth]{./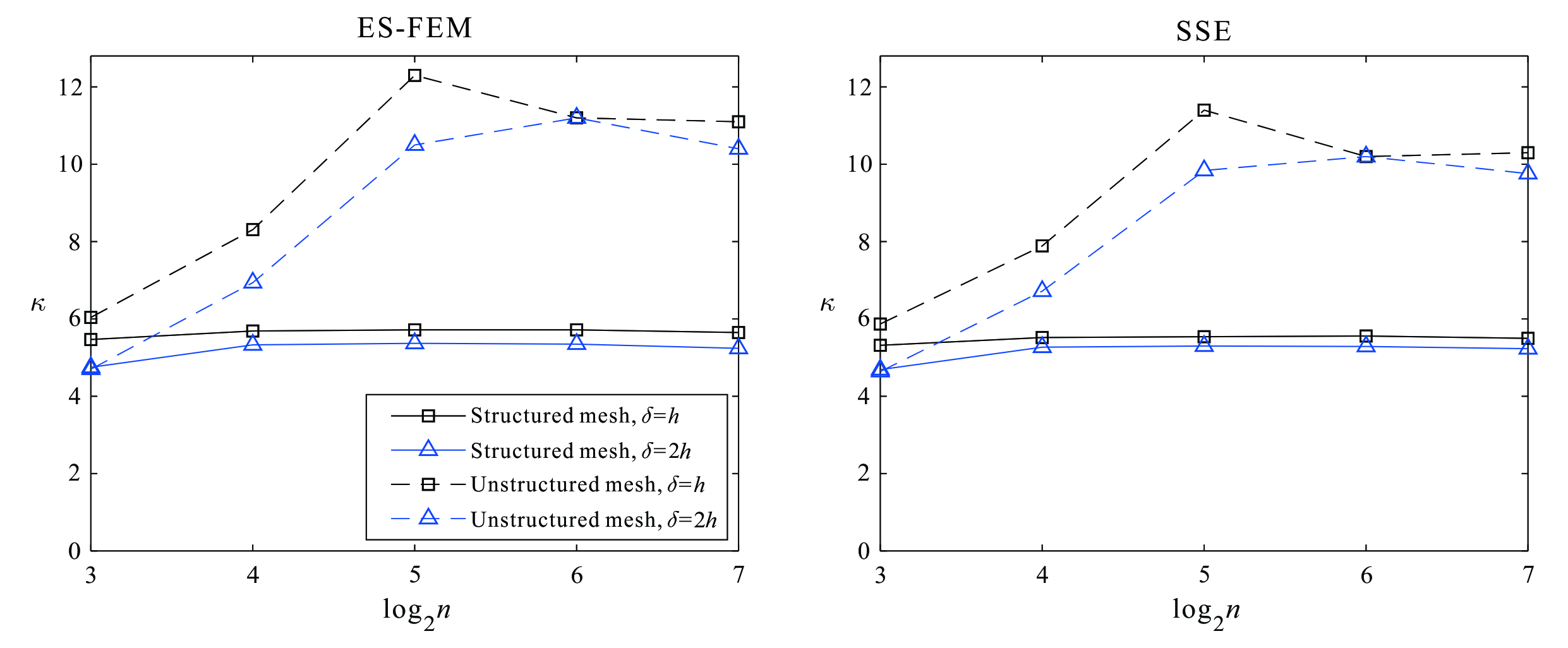}
\caption{Condition numbers $\kappa (\bar{M}_{\alt}^{-1} \bar{K})$ of the ES-FEM and SSE applied to the model Poisson problem~\eqref{model} when $n$ varies and $n/N$ $(=H/h)$ is fixed as $4$.}
\label{Fig:PCG_Poisson}
\end{figure}

\begin{figure}[]
\centering
\includegraphics[width=1\textwidth]{./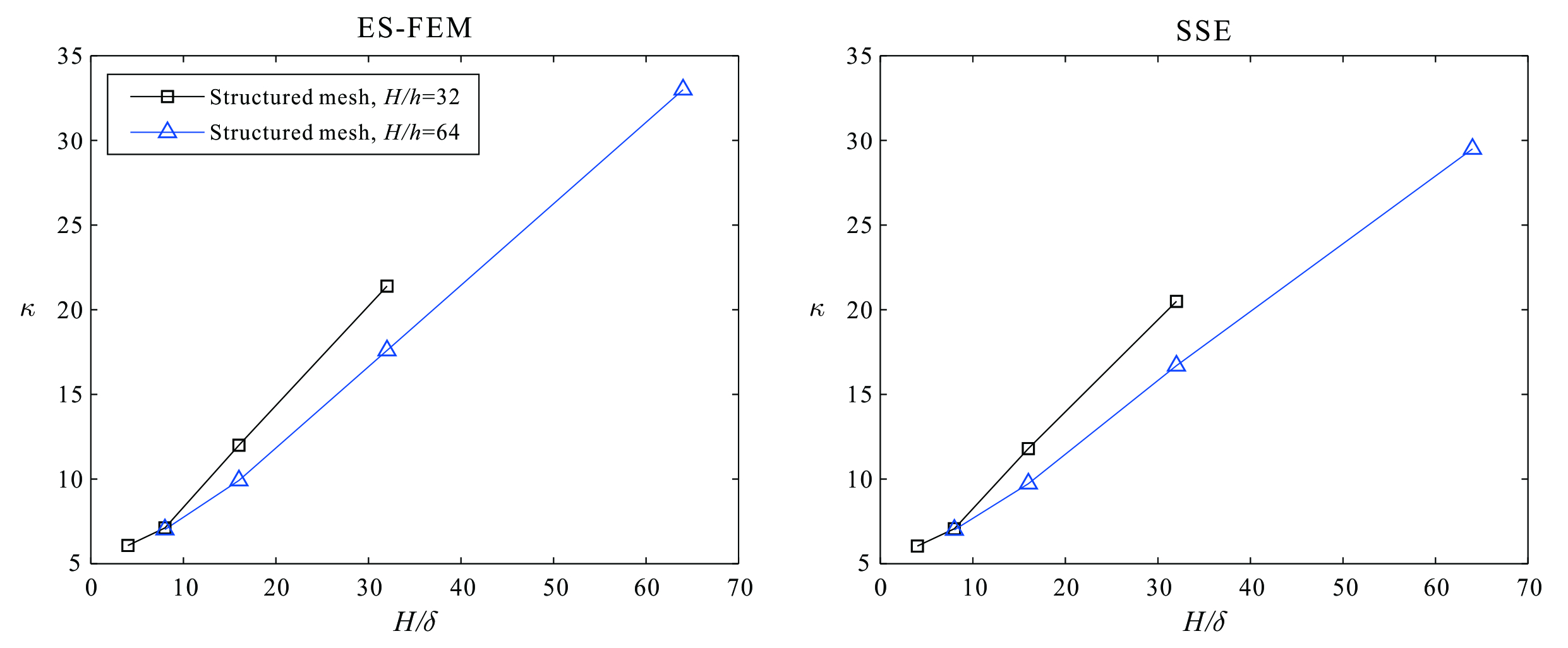}
\caption{Condition numbers $\kappa (\bar{M}_{\alt}^{-1} \bar{K})$ of the ES-FEM and SSE applied to the model Poisson problem~\eqref{model} when $H/\delta$ varies and $n/N$ $(=H/h)$ is fixed as $32$ or $64$.}
\label{Fig:Linear_Growth_Poisson}
\end{figure}

Table~\ref{Table:Spectrum_Poisson} provides the condition numbers $\kappa ( K^{-1} \bar{K}_{\ES})$ and $\kappa ( K^{-1} \bar{K}_{\SSE})$ for the structured and unstructured meshes for various values of $n$.
The condition numbers $\kappa ( K^{-1} \bar{K}_{\ES})$ and $\kappa ( K^{-1} \bar{K}_{\SSE})$ are eventually bounded when $n$ increases.
Hence, the stiffness matrices $\bar{K}_{\ES}$ and $\bar{K}_{\SSE}$ of the ES-FEM and SSE, respectively, are spectrally equivalent to the stiffness matrix $K$ of the standard FEM.
This numerically verifies Theorems~\ref{Thm:ES} and~\ref{Thm:SSE}.
Tables~\ref{Table:Poisson_ES-FEM} and~\ref{Table:Poisson_SSE} exhibit the condition numbers of the $M^{-1}$-, $\bar{M}^{-1}$-, and $\bar{M}_{\alt}^{-1}$-preconditioned stiffness matrices of the ES-FEM and SSE, respectively, and the corresponding conjugate gradient iteration counts denoted as \#iter for the structured meshes with various values of $n$ and $N$.
Fig.~\ref{Fig:PCG_Poisson} depicts the condition numbers of the $\bar{M}_{\alt}^{-1}$-preconditioned ES-FEM and SSE for the structured and unstructured meshes when $n/N$ $(=H/h)$ is fixed as $4$.
As we have explained theoretically in Section~\ref{Sec:Improvement}, $M^{-1}$-, $\bar{M}^{-1}$-, and $\bar{M}_{\alt}^{-1}$-preconditioned ES-FEM and SSE are all numerically scalable in the sense that the condition number and iteration count are eventually bounded when $n$ and $N$ increase keeping $n/N$ constant.
Moreover, we observe that each iteration count for the preconditioner $\bar{M}^{-1}$ is less than the corresponding counterpart for the preconditioner $M^{-1}$, which numerically verifies Theorem~\ref{Thm:enhanced}.
We also highlight that the condition numbers and iteration counts for the preconditioner $\bar{M}_{\alt}^{-1}$ are comparable to those for $\bar{M}^{-1}$.
Hence, as we have claimed in Section~\ref{Sec:Improvement}, $\bar{M}_{\alt}^{-1}$ can be a good alternative to $\bar{M}^{-1}$ with a comparable performance and easy implementation.
In addition, Fig.~\ref{Fig:Linear_Growth_Poisson} provides the condition numbers of the $\bar{M}_{\alt}^{-1}$-preconditioned ES-FEM and SSE for the structured meshes when $H/\delta$ varies and $n/N$ $(=H/h)$ is fixed as $32$ or $64$.
We verify the linear growth of the condition numbers for increasing values of $H/\delta$.


\subsection{Linear elasticity}
\label{Subsec:Elasticity}
We consider the following model linear elasticity problem defined on the domain $\Omega = (-1, 1)^2 \subset \mathbb{R}^2$:
\begin{equation}
\label{elasticity}
- \div \sigma (u) = b \gap\textrm{ in } \Omega,
\end{equation}
where $\sigma (u)$ is the Cauchy stress, $b$ is the body force given by $b (x,y) = (-y^2, 1-x^2)$, and the Dirichlet boundary condition $u=0$ is given along line AB~(see Fig.~\ref{Fig:block}(a)). The plane stress condition is employed with Young's modulus $E = 1 \times 10^3$ and Poisson's ratio $\nu = 0.2$.
The finite element models are constructed using $2 \times n \times n$ fine elements~($n=2^3$, $2^4$, \dots, $2^7$) and $ 2\times N \times N$ coarse elements~($N=2$, $2^2$, \dots, $2^5$) with the overlap width $\delta=h$ or $2h$, as shown in Fig.~\ref{Fig:block}(b).
We only present the results for the structured meshes; as in the case of the Poisson's equation, the unstructured meshes show similar behaviors as the structured meshes.

\begin{table}[] \centering
\begin{tabular}{ccccccccccc}
\hline
{} && $n=2^3$ && $n=2^4$ && $n=2^5$ && $n=2^6$ && $n=2^7$ \\
\hline
ES-FEM && 2.27e0 && 2.30e0 && 2.32e0 && 2.33e0 && 2.33e0 \\	
\hline
SSE && 3.68e0 && 3.78e0 && 3.84e0 && 3.86e0 && 3.87e0 \\
\hline 
\end{tabular}
\caption{Condition numbers $\kappa ( K^{-1} \bar{K}_{\ES})$ and $\kappa ( K^{-1} \bar{K}_{\SSE})$ with the structured meshes for the model linear elasticity problem~\eqref{elasticity}.}
\label{Table:Spectrum_LE}
\end{table}


\begin{table}[] \centering
\resizebox{\textwidth}{!}{
\begin{tabular}{ccccccccccccccccc}
\hline
\multirow{2}{*}{Precond.} & \multirow{2}{*}{$N$} && \multicolumn{2}{c}{$n=2^3$} && \multicolumn{2}{c}{$n=2^4$} && \multicolumn{2}{c}{$n=2^5$} && \multicolumn{2}{c}{$n=2^6$} && \multicolumn{2}{c}{$n=2^7$} \\
\cline{4-5} \cline{7-8} \cline{10-11} \cline{13-14} \cline{16-17}
&&& \#iter & $\kappa$ && \#iter & $\kappa$ && \#iter & $\kappa$ && \#iter & $\kappa$ && \#iter & $\kappa$ \\ 
\hline
None &&& 71 & 1.05e3 && 139 & 3.71e3 && 272 & 1.38e4 && 535 & 5.32e4 && 1065 & 2.08e5 \\
\hline
\multirow{5}{*}{\begin{tabular}{c}$M^{-1}$\end{tabular}}
& $2$ && 21 & 8.80e0 && 22 & 9.73e0 && 24 & 1.16e1 && 30 & 1.76e1 && 40 & 3.14e1 \\
& $2^2$ && - & - && 23 & 8.96e0 && 25 & 9.20e0 && 28 & 1.17e1 && 37 & 2.01e1 \\
& $2^3$ && - & - && - & - && 23 & 8.33e0 && 25 & 9.20e0 && 28 & 1.19e1 \\
& $2^4$ && - & - && - & - && - & - && 23 & 8.24e0 && 25 & 9.32e0 \\
& $2^5$ && - & - && - & - && - & - && - & - && 23 & 8.16e0 \\
\hline 
\multirow{5}{*}{\begin{tabular}{c}$\bar{M}^{-1}$\end{tabular}}
& $2$ && 18 & 7.66e0 && 20 & 9.26e0 && 22 & 1.22e1 && 28 & 1.94e1 && 38 & 3.51e1 \\
& $2^2$ && - & - && 20 & 6.88e0 && 21 & 7.58e0 && 26 & 1.18e1 && 33 & 2.13e1 \\
& $2^3$ && - & - && - & - && 20 & 6.83e0 && 22 & 8.58e0 && 26 & 1.19e1 \\
& $2^4$ && - & - && - & - && - & - && 20 & 6.72e0 && 22 & 8.78e0 \\
& $2^5$ && - & - && - & - && - & - && - & - && 20 & 6.71e0 \\
\hline       
\multirow{5}{*}{\begin{tabular}{c}$\bar{M}_{\alt}^{-1}$\end{tabular}}
& $2$ && 18 & 8.52e0 && 20 & 1.00e1 && 24 & 1.26e1 && 29 & 1.95e1 && 39 & 3.52e1 \\
& $2^2$ && - & - && 21 & 7.77e0 && 23 & 9.25e0 && 28 & 1.24e1 && 36 & 2.17e1 \\
& $2^3$ && - & - && - & - && 21 & 7.63e0 && 23 & 9.23e0 && 28 & 1.25e1 \\
& $2^4$ && - & - && - & - && - & - && 21 & 7.60e0 && 24 & 9.50e0 \\
& $2^5$ && - & - && - & - && - & - && - & - && 21 & 7.61e0 \\
\hline      
\end{tabular}
}
\caption{Condition numbers $\kappa$ and iteration counts \#iter of the ES-FEM applied to the model linear elasticity problem~\eqref{elasticity} for the structured meshes and $\delta=2h$ with the two-level additive Schwarz preconditioners $M^{-1}$, $\bar{M}^{-1}$, and $\bar{M}_{\alt}^{-1}$ defined in~\eqref{prec_original},~\eqref{prec_enhanced}, and~\eqref{prec_enhanced_alt}, respectively.}
\label{Table:LE_ES-FEM}
\end{table}

\begin{table}[] \centering
\resizebox{\textwidth}{!}{
\begin{tabular}{ccccccccccccccccc}
\hline
\multirow{2}{*}{Precond.} & \multirow{2}{*}{$N$} && \multicolumn{2}{c}{$n=2^3$} && \multicolumn{2}{c}{$n=2^4$} && \multicolumn{2}{c}{$n=2^5$} && \multicolumn{2}{c}{$n=2^6$} && \multicolumn{2}{c}{$n=2^7$} \\
\cline{4-5} \cline{7-8} \cline{10-11} \cline{13-14} \cline{16-17}
&&& \#iter & $\kappa$ && \#iter & $\kappa$ && \#iter & $\kappa$ && \#iter & $\kappa$ && \#iter & $\kappa$ \\ 
\hline
None &&& 67 & 8.00e2 && 123 & 2.80e3 && 237 & 1.04e4 && 464 & 3.99e4 && 922 & 1.56e5 \\
\hline
\multirow{5}{*}{\begin{tabular}{c}$M^{-1}$\end{tabular}}
& $2$ && 24 & 1.29e1 && 26 & 1.23e1 && 28 & 1.28e1 && 30 & 1.70e1 && 38 & 3.03e1 \\
& $2^2$ && - & - && 27 & 1.43e1 && 28 & 1.32e1 && 29 & 1.36e1 && 36 & 1.96e1 \\
& $2^3$ && - & - && - & - && 27 & 1.41e1 && 29 & 1.35e1 && 30 & 1.39e1 \\
& $2^4$ && - & - && - & - && - & - && 27 & 1.33e1 && 30 & 1.36e1 \\
& $2^5$ && - & - && - & - && - & - && - & - && 26 & 1.11e1 \\
\hline 
\multirow{5}{*}{\begin{tabular}{c}$\bar{M}^{-1}$\end{tabular}}
& $2$ && 18 & 7.45e0 && 19 & 9.02e0 && 22 & 1.20e1 && 28 & 1.90e1 && 36 & 3.43e1 \\
& $2^2$ && - & - && 20 & 6.76e0 && 21 & 7.34e0 && 25 & 1.15e1 && 33 & 2.08e1 \\
& $2^3$ && - & - && - & - && 20 & 6.67e0 && 22 & 8.35e0 && 25 & 1.16e1 \\
& $2^4$ && - & - && - & - && - & - && 20 & 6.55e0 && 22 & 8.55e0 \\
& $2^5$ && - & - && - & - && - & - && - & - && 20 & 6.54e0 \\
\hline       
\multirow{5}{*}{\begin{tabular}{c}$\bar{M}_{\alt}^{-1}$\end{tabular}}
& $2$ && 18 & 8.33e0 && 20 & 9.82e0 && 23 & 1.24e1 && 29 & 1.91e1 && 37 & 3.44e1 \\
& $2^2$ && - & - && 21 & 7.56e0 && 23 & 9.00e0 && 27 & 1.21e1 && 35 & 2.11e1 \\
& $2^3$ && - & - && - & - && 21 & 7.25e0 && 23 & 8.99e0 && 27 & 1.22e1 \\
& $2^4$ && - & - && - & - && - & - && 21 & 7.31e0 && 23 & 9.19e0 \\
& $2^5$ && - & - && - & - && - & - && - & - && 21 & 7.32e0 \\
\hline           
\end{tabular}
}
\caption{Condition numbers $\kappa$ and iteration counts \#iter of the SSE applied to the model linear elasticity problem~\eqref{elasticity} for the structured meshes and $\delta=2h$ with the two-level additive Schwarz preconditioners $M^{-1}$, $\bar{M}^{-1}$, and $\bar{M}_{\alt}^{-1}$ defined in~\eqref{prec_original},~\eqref{prec_enhanced}, and~\eqref{prec_enhanced_alt}, respectively.}
\label{Table:LE_SSE}
\end{table}

\begin{figure}[]
\centering\includegraphics[width=1\textwidth]{./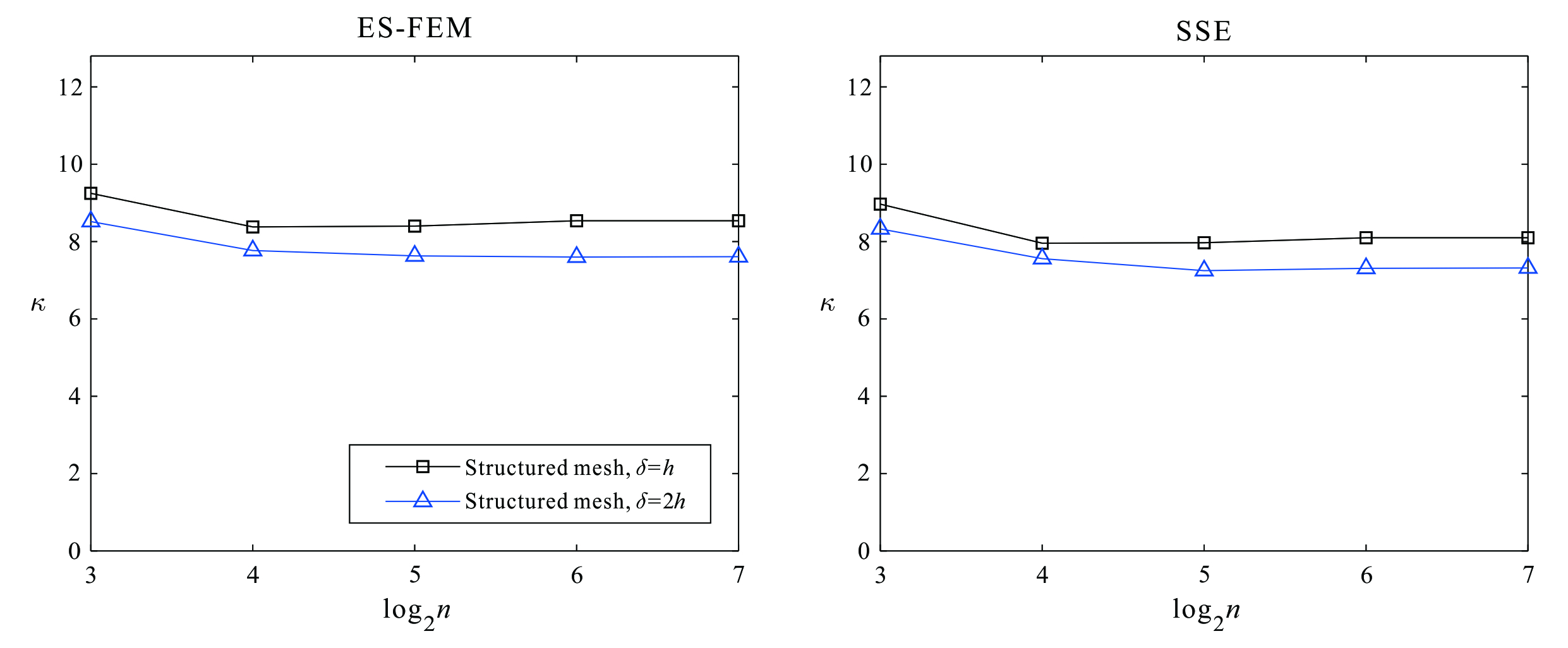}
\caption{Condition numbers $\kappa (\bar{M}_{\alt}^{-1} \bar{K})$ of the ES-FEM and SSE applied to the model linear elasticity problem~\eqref{elasticity} for the structured meshes when $n$ varies and $n/N$ $(=H/h)$ is fixed as $4$.}
\label{Fig:PCG_LE}
\end{figure}

Table~\ref{Table:Spectrum_LE} provides the condition numbers $\kappa ( K^{-1} \bar{K}_{\ES})$ and $\kappa ( K^{-1} \bar{K}_{\SSE})$ for various $n$.
Since the condition numbers for both ES-FEM and SSE are eventually bounded when $n$ becomes larger, we confirm the spectral equivalence among the stiffness matrices of the standard FEM, ES-FEM, and SSE.
Table~\ref{Table:LE_ES-FEM} presents the condition numbers of the $M^{-1}$-, $\bar{M}^{-1}$-, and $\bar{M}_{\alt}^{-1}$-preconditioned stiffness matrices and the corresponding conjugate gradient iteration counts \#iter for the ES-FEM.
Table~\ref{Table:LE_SSE} presents the results corresponding to the SSE.
Fig.~\ref{Fig:PCG_LE} shows the condition numbers when the preconditioner $\bar{M}_{\alt}^{-1}$ is used and $n/N$~$(=H/h)$ is fixed as $4$ for the ES-FEM and SSE.
Similar to the Poisson problem, it is observed that both the condition number and iteration count are eventually bounded when $n$ and $N$ increase, keeping the ratio $n/N$ constant, which implies that all the preconditioned methods are numerically scalable.
Moreover, the iteration counts for the preconditioners $\bar{M}^{-1}$ and $\bar{M}_{\alt}^{-1}$ are smaller than the corresponding values for $M^{-1}$ for most of the cases; this indicates the numerical efficiency of the proposed enhanced preconditioners $\bar{M}^{-1}$ and $\bar{M}_{\alt}^{-1}$ applied to the linear elasticity problem.
In conclusion, we have numerically proven that all the theoretical results developed in this study are valid for the Poisson and linear elasticity problems.

\section{Remarks on node-based strain smoothing}
\label{Sec:NS} 
We observed that the ES-FEM and SSE enjoy the spectral equivalence with the standard FEM.
In contrast, not every FEM with strain smoothing satisfies such equivalence property.
Particularly, we present an example in which the stiffness matrix of the NS-FEM may not be spectrally equivalent to that of the standard FEM.

In the NS-FEM, each element in $\cT_h$ is divided into three quadrilateral subdomains using its nodes, midpoints of element edges, and barycenter. The node-based smoothing domains consist of assemblages of adjacent subdomains belonging to different elements based on nodes; see~Fig.~\ref{Fig:domains}(c).
Denoting the collection of all smoothing domains constructed from $\cT_h$ for the NS-FEM by $\cS_{h,\NS}$, a smoothed gradient $\bar\epsilon^{(s)}$ for a smoothing domain $s \in \cS_{h,\NS}$ is defined by
\begin{equation}
\label{NS_gradient}
\bar\epsilon^{(s)} = \frac{\sum_{k=1}^{m} |e^{(k)}| \epsilon^{(e^{(k)})}}{\sum_{k=1}^{m} |e^{(k)}|},
\end{equation}
where $e^{(k)}$ is the $k$th element in $\cT_h$ neighboring to the node corresponding to $s$, $\epsilon^{(e^{(k)})}$ was defined in~\eqref{local_gradient}, and $m$ is the number of neighboring elements in $\cT_h$.
The number $m$ varies per node in general.
Using the smoothed gradient in~\eqref{NS_gradient}, the stiffness matrix $\bar{K}_{\NS}$ for the NS-FEM is defined  in a similar manner as~\eqref{stiffness_ES}.
It is known that the NS-FEM is effective in alleviating volumetric locking~\cite{LNNL:2009}.

The following example shows that the NS-FEM in one-dimension is not spectrally equivalent to the standard FEM; examples corresponding to higher-dimensional cases can be constructed similarly.

\begin{figure}[]
\centering
\includegraphics[width=1\textwidth]{./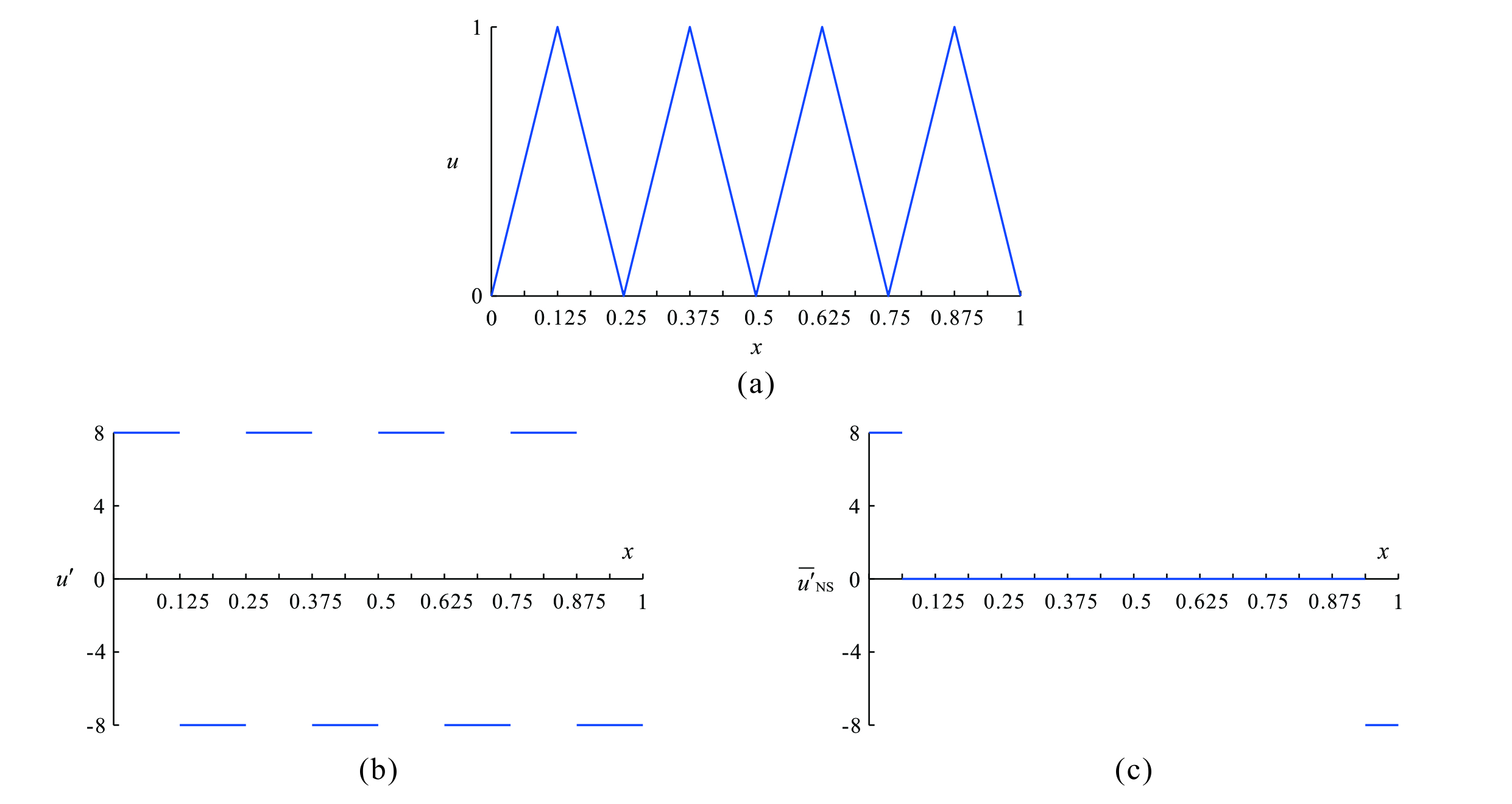}
\caption{Graphs of \textbf{(a)} the function $u$, \textbf{(b)} its derivative $u'$, and \textbf{(c)} the node-based smoothed derivative $\bar{u'}_{\NS}$ in Example~\ref{Ex:NS}, when $n=8$.}
\label{Fig:ex_ns_u}
\end{figure}

\begin{figure}[]
\centering
\includegraphics[width=1\textwidth]{./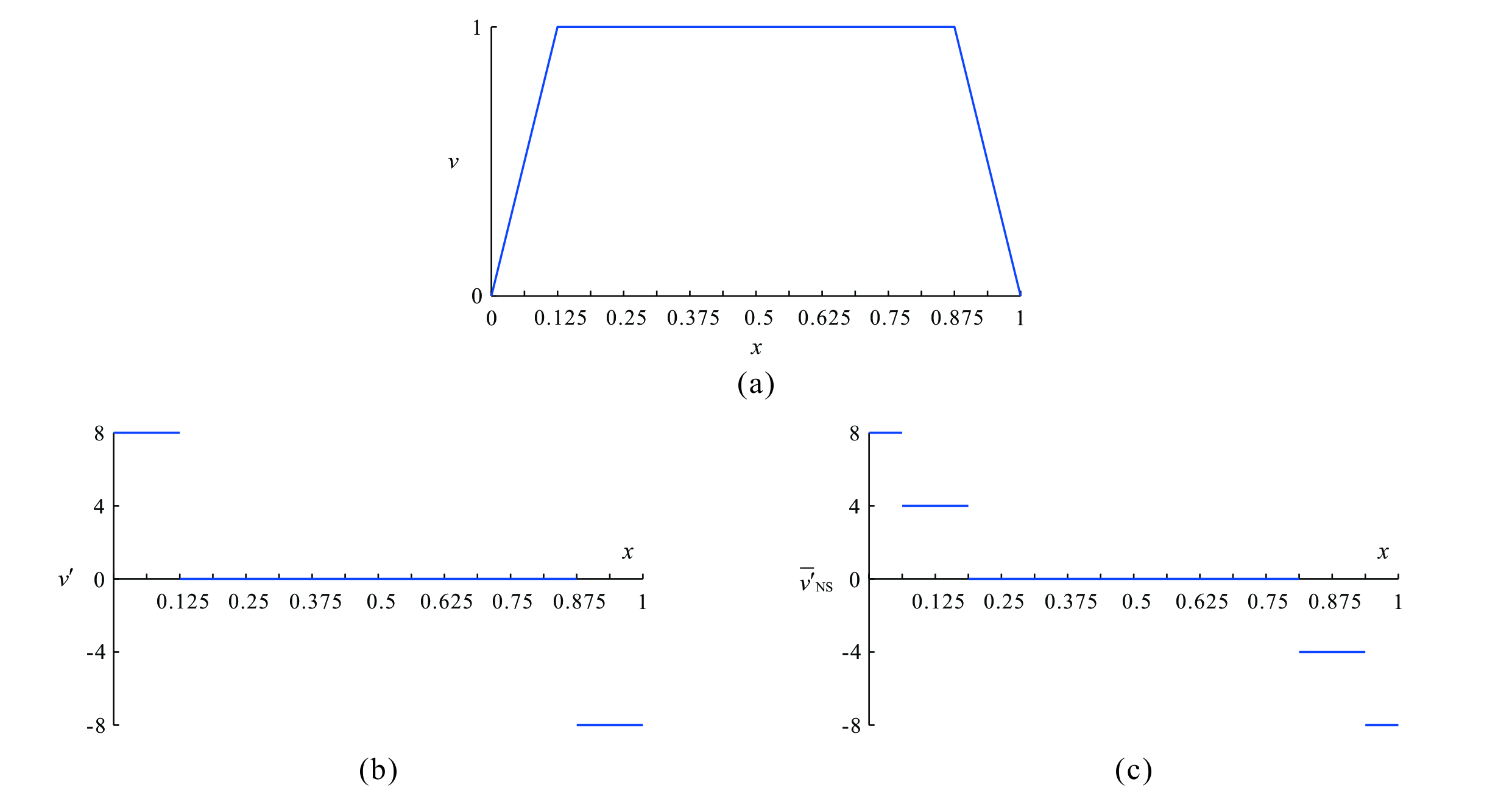}
\caption{Graphs of \textbf{(a)} the function $v$, \textbf{(b)} its derivative $v'$, and \textbf{(c)} the node-based smoothed derivative $\bar{v'}_{\NS}$ in Example~\ref{Ex:NS}, when $n=8$.}
\label{Fig:ex_ns_v}
\end{figure}

\begin{example}
\label{Ex:NS}
Let $\Omega = [0,1] \subset \mathbb{R}$ and let $\cT_h$ be the uniform partition of $\Omega$ into $n$ subintervals, where $n$ is a positive even integer.
In this case, the space $V_h$ is defined as the collection of all piecewise linear and continuous functions on $\cT_h$ satisfying the homogeneous Dirichlet boundary condition. 
As depicted in Fig.~\ref{Fig:ex_ns_u}(a), we set $u \in V_h$ such that
\begin{equation*}
u \left( \frac{i}{n} \right) = \begin{cases} 0 & \text{ if } i \text{ is even}, \\
1 & \text{ if } i \text{ is odd}, \end{cases}
\quad i = 1, 2, \dots, n-1.
\end{equation*}
In each subinterval $i/n < x < (i+1)/n$, $i=0, 1, \dots, n-1$, we have
\begin{equation*}
u'(x) = \begin{cases} n & \text{ if } i \text{ is even}, \\
-n & \text{ if } i \text{ is odd}. \end{cases}
\end{equation*}
By applying the node-based smoothing to $u'$, we obtain the smoothed derivative $\bar{u'}_{\NS}$ as follows:
\begin{equation*}
\bar{u'}_{\NS}(x) = \begin{cases} n & \text{ if } 0< x < \frac{1}{2n}, \\
0 & \text{ if } \frac{1}{2n} < x < 1 - \frac{1}{2n}, \\
-n & \textrm{ if } 1 - \frac{1}{2n} < x < 1. \end{cases}
\end{equation*}
The graphs of $u'$ and $\bar{u'}_{\NS}$ are plotted in Figs.~\ref{Fig:ex_ns_u}(b) and~(c), respectively.
Hence, it follows that
\begin{equation}
\label{NS_min}
\lambda_{\min}(K^{-1} \bar{K}_{\NS}) \leq \frac{u^T \bar{K}_{\NS} u}{u^T K u}
= \frac{\intO |\bar{u'}_{\NS}|^2 \,dx}{\intO |u'|^2 \,dx}
= \frac{1}{n}.
\end{equation}
Meanwhile, as shown in Fig.~\ref{Fig:ex_ns_v}(a), we set $v \in V_h$ such that
\begin{equation*}
v \left( \frac{i}{n} \right) = 1, \quad i=1,2, \dots, n-1.
\end{equation*}
Then one can readily obtain
\begin{equation*}
v'(x) = \begin{cases} n & \text{ if } 0< x < \frac{1}{n}, \\
0 & \text{ if } \frac{1}{n} < x < 1 - \frac{1}{n}, \\
-n & \textrm{ if } 1 - \frac{1}{n} < x < 1, \end{cases}
\end{equation*}
and
\begin{equation*}
\bar{v'}_{\NS}(x) = \begin{cases} n & \text{ if } 0< x < \frac{1}{2n}, \\
\frac{n}{2} & \text{ if } \frac{1}{2n} < x < \frac{3}{2n}, \\
0 & \text{ if } \frac{3}{2n} < x < 1 - \frac{3}{2n}, \\
-\frac{n}{2} & \text{ if } 1 - \frac{3}{2n} < x < 1 - \frac{1}{2n}, \\
-n & \textrm{ if } 1 - \frac{1}{2n} < x < 1. \end{cases}
\end{equation*}
See Figs.~\ref{Fig:ex_ns_v}(b) and~(c) for the graphs of $v'$ and $\bar{v'}_{\NS}$, respectively.
By direct calculation, we have
\begin{equation}
\label{NS_max}
\lambda_{\max} (K^{-1} \bar{K}_{\NS}) \geq \frac{v^T \bar{K}_{\NS} v}{v^T K v}
= \frac{\intO |\bar{v'}_{\NS}|^2 \,dx}{\intO |v'|^2 \,dx}
= \frac{3}{4}.
\end{equation}
Combining~\eqref{NS_min} and~\eqref{NS_max} yields
\begin{equation*}
\kappa ( K^{-1} \bar{K}_{\NS}) = \frac{\lambda_{\max} ( K^{-1} \bar{K}_{\NS})}{\lambda_{\min} ( K^{-1} \bar{K}_{\NS})} \geq \frac{3n}{4} = O( h^{-1}),
\end{equation*}
which implies that $\bar{K}_{\NS}$ and $K$ are not spectrally equivalent.

\begin{table}[] \centering
\begin{tabular}{ccccccccccc}
\hline
{} && $n=2^3$ && $n=2^4$ && $n=2^5$ && $n=2^6$ && $n=2^7$ \\
\hline
NS-FEM && 9.36e1 && 3.75e2 && 1.53e3 && 6.21e3 && 2.50e4 \\		
\hline 
\end{tabular}
\caption{Condition numbers $\kappa ( K^{-1} \bar{K}_{\NS})$ with the structured meshes for the model linear elasticity problem~\eqref{elasticity}.}
\label{Table:Spectrum_LE_NS-FEM}
\end{table}

\begin{table}[] \centering
\resizebox{\textwidth}{!}{
\begin{tabular}{ccccccccccccccccc}
\hline
\multirow{2}{*}{Precond.} & \multirow{2}{*}{$N$} && \multicolumn{2}{c}{$n=2^3$} && \multicolumn{2}{c}{$n=2^4$} && \multicolumn{2}{c}{$n=2^5$} && \multicolumn{2}{c}{$n=2^6$} && \multicolumn{2}{c}{$n=2^7$} \\
\cline{4-5} \cline{7-8} \cline{10-11} \cline{13-14} \cline{16-17}
&&& \#iter & $\kappa$ && \#iter & $\kappa$ && \#iter & $\kappa$ && \#iter & $\kappa$ && \#iter & $\kappa$ \\ 
\hline
None &&& 60 & 4.03e2 && 112 & 1.40e3 && 204 & 5.19e3 && 393 & 1.99e4 && 743 & 7.81e4 \\
\hline
\multirow{5}{*}{\begin{tabular}{c}$M^{-1}$\end{tabular}}
& $2$ && 60 & 1.22e2 && 133 & 7.81e2 && 270 & 3.87e3 && 498 & 1.77e4 && 906 & 7.62e4 \\
& $2^2$ && - & - && 116 & 4.47e2 && 269 & 3.26e3 && 515 & 1.58e4 && 943 & 6.54e4 \\
& $2^3$ && - & - && - & - && 218 & 1.82e3 && 496 & 1.30e4 && 963 & 6.24e4 \\
& $2^4$ && - & - && - & - && - & - && 389 & 7.25e3 && 906 & 5.14e4 \\
& $2^5$ && - & - && - & - && - & - && - & - && 694 & 2.69e4 \\
\hline 
\multirow{5}{*}{\begin{tabular}{c}$\bar{M}^{-1}$\end{tabular}}
& $2$ && 18 & 6.94e0 && 23 & 1.07e1 && 31 & 2.12e1 && 44 & 4.37e1 && 62 & 8.98e1 \\
& $2^2$ && - & - && 29 & 1.80e1 && 37 & 3.70e1 && 54 & 8.23e1 && 78 & 1.76e2 \\
& $2^3$ && - & - && - & - && 52 & 6.82e1 && 66 & 1.48e2 && 97 & 3.40e2 \\
& $2^4$ && - & - && - & - && - & - && 95 & 2.74e2 && 119 & 6.03e2 \\
& $2^5$ && - & - && - & - && - & - && - & - && 172 & 1.11e3 \\
\hline       
\multirow{5}{*}{\begin{tabular}{c}$\bar{M}_{\alt}^{-1}$\end{tabular}}
& $2$ && 18 & 7.85e0 && 24 & 1.10e1 && 32 & 2.16e1 && 45 & 4.39e1 && 63 & 8.99e1 \\
& $2^2$ && - & - && 29 & 1.82e1 && 38 & 3.97e1 && 56 & 8.57e1 && 80 & 1.78e2 \\
& $2^3$ && - & - && - & - && 52 & 6.93e1 && 69 & 1.62e2 && 102 & 3.59e2 \\
& $2^4$ && - & - && - & - && - & - && 96 & 2.80e2 && 125 & 6.63e2 \\
& $2^5$ && - & - && - & - && - & - && - & - && 173 & 1.13e3 \\
\hline      
\end{tabular}
}
\caption{Condition numbers $\kappa$ and iteration counts \#iter of the NS-FEM applied to the model linear elasticity problem~\eqref{elasticity} for the structured meshes and $\delta=2h$ with the two-level additive Schwarz preconditioners $M^{-1}$, $\bar{M}^{-1}$, and $\bar{M}_{\alt}^{-1}$ defined in~\eqref{prec_original},~\eqref{prec_enhanced}, and~\eqref{prec_enhanced_alt}, respectively.}
\label{Table:LE_NS-FEM}
\end{table}

We revisit the linear elasticity problem in Section~\ref{Subsec:Elasticity} using the structured meshes, as shown in Fig.~\ref{Fig:block}; Table~\ref{Table:Spectrum_LE_NS-FEM} provides the condition numbers $\kappa ( K^{-1} \bar{K}_{\NS})$ for various values of $n$.
The results show that the stiffness matrix of the NS-FEM is not spectrally equivalent to that of the standard FEM; $\kappa ( K^{-1} \bar{K}_{\NS})$ increases approximately four times whenever $n$ doubles.
Table~\ref{Table:LE_NS-FEM} presents the condition numbers of the $M^{-1}$-, $\bar{M}^{-1}$-, and $\bar{M}_{\alt}^{-1}$-preconditioned stiffness matrices and the corresponding conjugate gradient iteration counts \#iter for the NS-FEM.
As expected, the condition number and iteration count increase when $n$ and $N$ increase, keeping the ratio $n/N$ constant for all the preconditioners considered.
Additionally, it is numerically confirmed that the preconditioners $\bar{M}^{-1}$ and $\bar{M}_{\alt}^{-1}$ reduce the condition number to some extent, whereas the preconditioner $M^{-1}$ does not.
\end{example}

\section{Conclusion}
\label{Sec:Conclusion}
Based on the fact that the stiffness matrices of the standard FEM, ES-FEM, and SSE are spectrally equivalent, we proved that any existing preconditioner for the standard FEM can be applied to the ES-FEM and SSE, inheriting good convergence properties such as numerical scalability.
We proposed the improved two-level additive Schwarz preconditioners for the ES-FEM and SSE.
Theoretically and numerically, the proposed preconditioners outperformed the standard one when they were applied to the ES-FEM and SSE.

This study suggests several interesting topics for future research.
The motivation for developing iterative solvers may influence their application to large-scale problems; we must solve more complex engineering problems on a large scale using FEMs with strain smoothing, equipped with the proposed preconditioners.
It is interesting to consider large-scale problems with oscillatory and high contrast coefficients~\cite{KRR:2015,KCW:2017}, which appear in the mathematical modeling of the flow in heterogeneous porous media.
Meanwhile, we observed in Section~\ref{Sec:NS} that the spectral property of the NS-FEM is different from the ES-FEM and SSE.
This proves that mathematical properties of the NS-FEM are somewhat different from those of other FEMs with strain smoothing.
Hence, developing a mathematical theory on the NS-FEM should be considered as a separate study.

\appendix
\section{Convergence theory of additive Schwarz methods}
\label{App:ASM}
In this appendix, we provide a brief summary on the abstract convergence theory of additive Schwarz methods introduced in~\cite{TW:2005,Park:2020}.
Let $V$ be a Hilbert space.
We consider the model linear problem
\begin{equation*}
Au = f,
\end{equation*}
where $A \colon V \rightarrow V$ is a symmetric and positive definite linear operator and $f \in V$.
In what follows, an index $j$ runs from $1$ to $\cN$.
For a Hilbert space $V_j$, we assume that there exists an interpolation operator $R_j^T \colon V_j \rightarrow V$ such that $V = \sum_{j=1}^{\cN} R_j^T V_j$.
Let $\tilde{A}_j \colon V_j \rightarrow V_j$ be a symmetric and positive definite linear operator which plays a role of a local operator on $V_j$.
In this setting, the additive Schwarz preconditioner $M^{-1} \colon V \rightarrow V$ is given by
\begin{equation*}
M^{-1} = \sum_{j=1}^{\cN} R_j^T \tilde{A}_j^{-1}R_j.
\end{equation*}
In order to obtain an upper bound for the condition number of the preconditioned operator $M^{-1}A$, we need the following three assumptions~\cite[Assumptions~4.9--4.11]{Park:2020}.

\begin{assumption}
\label{Ass:stable}
There exists a constant $C_0 > 0$ which satisfies the following: for any $v \in V$, there exists $v_j \in V_j$, $1 \leq j \leq \cN$, such that
\begin{equation*}
v = \sum_{j=1}^{\cN} R_j^T v_j
\end{equation*}
and
\begin{equation*}
\sum_{j=1}^{\cN} v_j^T \tilde{A}_j v_j \leq C_0^2 v^T A v.
\end{equation*}
\end{assumption}

\begin{assumption}
\label{Ass:CS}
There exists a constant $\tau_0 > 0$ which satisfies the following: for any $v_j \in V_j$, $1 \leq j \leq \cN$, and $\tau \in (0, \tau_0]$, we have
\begin{equation*}
\left( \sum_{j=1}^{\cN} R_j^T v_j \right)^T A \left( \sum_{j=1}^{\cN} R_j^T v_j \right)
\leq \frac{1}{\tau} \sum_{j=1}^{\cN} (R_j^T v_j)^T A (R_j v_j).
\end{equation*}
\end{assumption}

\begin{assumption}
\label{Ass:local}
There exists a constant $\omega_0 > 0$ which satisfies the following: for any $v_j \in V_j$, $1 \leq j \leq \cN$, we have
\begin{equation*}
(R_j^T v_j)^T A (R_j^T v_j) \leq \omega_0 v_j^T \tilde{A}_j v_j.
\end{equation*}
\end{assumption}

For detailed explanations of the assumptions above, we refer to~\cite{TW:2005,Park:2020}.
Under Assumptions~\ref{Ass:stable},~\ref{Ass:CS}, and~\ref{Ass:local}, we define the
additive Schwarz condition number $\kappa_{\ASM}$ as follows:
\begin{equation}
\label{kASM}
\kappa_{\ASM} = \frac{\omega_0 C_0^2}{\tau_0},
\end{equation}
where $C_0$, $\tau_0$, and $\omega_0$ are chosen as optimal as possible.
That is, $C_0$ is chosen as the minimum one satisfying Assumption~\ref{Ass:stable}, $\tau_0$ as the maximum one satisfying Assumption~\ref{Ass:CS}, and $\omega_0$ as the minimum one satisfying Assumption~\ref{Ass:local}.
The following theorem suggests that the convergence rate of a preconditioned iterative algorithm for $M^{-1}A$ relies on the additive Schwarz condition number $\kappa_{\ASM}$~\cite{Park:2020}.

\begin{theorem}
\label{Thm:ASM}
Under Assumptions~\ref{Ass:stable},~\ref{Ass:CS}, and~\ref{Ass:local}, we have
\begin{equation*}
\frac{\tau_0}{\omega_0} v^T A v \leq v^T M v \leq C_0^2 v^T A v \quad \forall v \in V.
\end{equation*}
Consequently, the following holds:
\begin{equation*}
\kappa (M^{-1}A) \leq \kappa_{\ASM},
\end{equation*}
where $\kappa_{\ASM}$ was given in~\eqref{kASM}.
\end{theorem}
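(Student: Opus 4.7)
The plan is to first establish the sandwiched energy bound $\tfrac{\tau_0}{\omega_0} v^T A v \leq v^T M v \leq C_0^2 v^T A v$ for all $v \in V$, and then translate it into a condition number estimate via a standard generalized Rayleigh-quotient argument. The linchpin of the argument is the well-known variational identity for the additive Schwarz operator,
\begin{equation*}
v^T M v = \inf \left\{ \sum_{j=1}^{\cN} v_j^T \tilde{A}_j v_j : v = \sum_{j=1}^{\cN} R_j^T v_j, \gap v_j \in V_j \right\},
\end{equation*}
which can be verified by a Lagrange-multiplier computation on the constrained minimization of $\sum_j v_j^T \tilde{A}_j v_j$ subject to $\sum_j R_j^T v_j = v$: stationarity in the $v_j$ forces $v_j = \tilde{A}_j^{-1} R_j \mu$ for some multiplier $\mu \in V$, feasibility then fixes $\mu = Mv$, and back-substitution gives the minimum value $v^T M v$.

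Once this identity is in hand, the upper bound is immediate: Assumption~\ref{Ass:stable} hands us, for any $v \in V$, a specific decomposition whose local energies sum to at most $C_0^2 v^T A v$, and the infimum cannot exceed this particular value. For the lower bound I would take an arbitrary decomposition $v = \sum_j R_j^T v_j$, apply Assumption~\ref{Ass:CS} with $\tau = \tau_0$ to obtain
\begin{equation*}
v^T A v \leq \frac{1}{\tau_0} \sum_{j=1}^{\cN} (R_j^T v_j)^T A (R_j^T v_j),
\end{equation*}
and then dominate each summand on the right by $\omega_0 v_j^T \tilde{A}_j v_j$ via Assumption~\ref{Ass:local}. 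Taking the infimum over admissible decompositions on the resulting right-hand side yields $v^T A v \leq (\omega_0/\tau_0) v^T M v$, which is exactly the remaining estimate.

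Since $A$ and $M$ are symmetric positive definite, the eigenvalues of $M^{-1}A$ coincide with the extremal generalized Rayleigh quotients $v^T A v / v^T M v$, so the two-sided energy bound immediately produces $\lambda_{\min}(M^{-1}A) \geq 1/C_0^2$ and $\lambda_{\max}(M^{-1}A) \leq \omega_0/\tau_0$, whence $\kappa(M^{-1}A) \leq \omega_0 C_0^2/\tau_0 = \kappa_{\ASM}$. The only genuinely non-routine step in this argument is the variational identity; after that, the three assumptions slot in mechanically, so I expect the main obstacle to be giving a clean derivation (or citation) of the minimization characterization rather than any of the subsequent manipulations.
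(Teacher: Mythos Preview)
Your argument is correct and follows the classical route: the variational identity $v^T M v = \inf\{\sum_j v_j^T \tilde{A}_j v_j\}$ is precisely Lemma~2.5 in~\cite{TW:2005} (which the paper itself invokes elsewhere), and once it is in place the three assumptions feed in exactly as you describe. Note, however, that the paper does not supply its own proof of this theorem; it is quoted from~\cite{Park:2020} and~\cite{TW:2005} without argument, so there is nothing in-text to compare against beyond confirming that your derivation matches the standard one in those references.
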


\bibliographystyle{elsarticle-num-names} 
\bibliography{refs_SSE}

\end{document}